\documentclass[11pt,twoside,reqno]{amsart}

\usepackage{amssymb, latexsym}
\usepackage[italian,english]{babel}
\usepackage{amsmath,amsfonts,amsthm}
\usepackage{paralist}
\usepackage[top=2.50cm, bottom=2.50cm, left=2.50cm, right=2.50cm]{geometry}

\usepackage{color}


\numberwithin{equation}{section}

\usepackage{xspace}
\usepackage[bookmarksnumbered,colorlinks]{hyperref}
\usepackage{graphics}
\def\bb#1\eb{\textcolor{blue}
{#1}} %
\def\br#1\er{\textcolor{red}
{#1}} %
\def\bv#1\ev{\textcolor{green}
{#1}} %
\def\bc#1\ec{\textcolor{cyan}
{#1}} %

\usepackage{graphics}



\def\Xint#1{\mathchoice
  {\XXint\displaystyle\textstyle{#1}}%
  {\XXint\textstyle\scriptstyle{#1}}%
  {\XXint\scriptstyle\scriptscriptstyle{#1}}%
  {\XXint\scriptscriptstyle\scriptscriptstyle{#1}}%
  \!\int}
\def\XXint#1#2#3{{\setbox0=\hbox{$#1{#2#3}{\int}$}
  \vcenter{\hbox{$#2#3$}}\kern-.5\wd0}}
\def\-int{\Xint -}

\newcommand{\h}{\mathbb{H}^{s}_{2\pi}}
\newcommand{\R}{\mathbb{R}}
\newcommand{\N}{\mathbb{N}}
\newcommand{\C}{\mathcal{C}}
\newcommand{\Z}{\mathbb{Z}}
\newcommand{\B}{(-\pi, \pi)^{N}}
\newcommand{\T}{\mathcal{S}_{2\pi}}
\newcommand{\X}{H^{1}_{0, L}(\mathcal{C})}
\newcommand{\Y}{\mathbb{X}^{s}_{2\pi}}
\newcommand{\A}{\mathbb{H}^{s}(\Omega)}
\newcommand{\D}{\textup{Dir}}

\DeclareMathOperator{\dive}{div}

\DeclareMathOperator{\e}{\varepsilon}

\newtheorem{prop}{Proposition}[section]
\newtheorem{lem}{Lemma}[section]
\newtheorem{thm}{Theorem}[section]

\newtheorem{remark}{Remark}[section]

\begin{document}
\title[Fractional Ambrosetti-Prodi problems]{An Ambrosetti-Prodi type result for fractional spectral problems} 

\author[V. Ambrosio]{Vincenzo Ambrosio}
\address{Vincenzo Ambrosio\hfill\break\indent 
Department of Mathematics  \hfill\break\indent
EPFL SB CAMA \hfill\break\indent
Station 8 CH-1015 Lausanne, Switzerland}
\email{vincenzo.ambrosio2@unina.it}

\keywords{Fractional spectral operators; variational methods; Ambrosetti-Prodi problem}
\subjclass[2010]{35A15, 35J60, 35R11, 45G05}

\begin{abstract}
We consider the following class of fractional parametric problems
\begin{equation*}
\left\{
\begin{array}{ll}
(-\Delta_{\D})^{s} u= f(x, u)+t\varphi_{1}+h &\mbox{ in } \Omega\\
u=0 &\mbox{ on } \partial \Omega,
\end{array}
\right.
\end{equation*}
where $\Omega\subset \R^{N}$ is a smooth bounded domain, $s\in (0, 1)$, $N> 2s$, $(-\Delta_{\D})^{s}$ is the fractional Dirichlet Laplacian, $f: \bar{\Omega} \times \R\rightarrow \R$ is a locally Lipschitz nonlinearity having linear or superlinear growth and satisfying Ambrosetti-Prodi type assumptions, $t\in \R$, $\varphi_{1}$ is the first eigenfunction of the Laplacian with homogenous boundary conditions, and $h:\Omega\rightarrow \R$ is a bounded function. Using variational methods, we prove that there exists a $t_{0}\in \R$ such that the above problem admits at least two distinct solutions
for any $t\leq t_{0}$. 
We also discuss the existence of solutions for a fractional periodic Ambrosetti-Prodi type problem.
\end{abstract}

\maketitle
\section{Introduction}

\noindent
In this paper we deal with the multiplicity of solutions for the following nonlinear fractional equation 
\begin{equation}\label{P0}
\left\{
\begin{array}{ll}
(-\Delta_{\D})^{s} u= f(x, u)+t\varphi_{1}+h &\mbox{ in } \Omega\\
u=0 &\mbox{ on } \partial \Omega,
\end{array}
\right.
\end{equation}
where $\Omega$ is a smooth bounded domain of $\R^{N}$, $N> 2s$, $t\in \R$ is a parameter, 
$h\in L^{\infty}(\Omega)$ is such that $\int_{\Omega} h \varphi_{1} \, dx=0$ 
and $f:\bar{\Omega}\times \R\rightarrow \R$ is a locally Lipschitz function verifying
\begin{equation}\label{f1}
\limsup_{t\rightarrow -\infty} \frac{f(x, t)}{t}<\lambda_{1}^{s}< \liminf_{t\rightarrow \infty} \frac{f(x, t)}{t} \quad \mbox{ uniformly for a.e. } x\in \Omega.
\end{equation}
A typical example for $f$ is given by the jumping nonlinearity $f(x, t)=\gamma_{1} t^{+}-\gamma_{2} t^{-}$ with $\gamma_{2}<\lambda_{1}^{s}<\gamma_{1}$.
The operator $(-\Delta_{\D})^{s}$ is defined as follows. Let us denote by $(\varphi_{k}, \lambda_{k})$ the eigenvalues and corresponding eigenfunctions of the Laplacian $-\Delta$ in $\Omega$ with Dirichlet boundary conditions
\begin{equation*}
\left\{
\begin{array}{ll}
-\Delta \varphi_{k}=\lambda_{k} \varphi_{k} &\mbox{ in } \Omega\\
\varphi_{k}=0 &\mbox{ on } \partial \Omega,
\end{array}
\right.
\end{equation*}
normalized by $|\varphi_{k}|_{2}=1$ for any $k\in \N$.
Then, the fractional Dirichlet Laplacian is given by 
$$
(-\Delta_{\D})^{s} u(x)=\sum_{k=1}^{\infty} c_{k} \lambda_{k}^{s} \varphi_{k}(x)  \quad \forall x\in \Omega
$$
for any $u=\sum_{k=1}^{\infty} c_{k} \varphi_{k}\in C^{\infty}_{c}(\Omega)$ where $c_{k}=\int_{\Omega} u \varphi_{k} \, dx$.
This definition can be extended by density for any function $u$ belonging to the Hilbert space
$$
\A=\Bigl\{u\in L^{2}(\Omega): \sum_{k=1}^{\infty} |c_{k}|^{2}\lambda_{k}^{s}<\infty\Bigr\}
$$
which can be characterized \cite{LM} as follows
\begin{equation*}
\A=
\left\{
\begin{array}{ll}
H^{s}(\Omega) &\mbox{ if } s\in (0, \frac{1}{2})\\
H^{\frac{1}{2}}_{00}(\Omega) &\mbox{ if } s=\frac{1}{2} \\
H^{s}_{0}(\Omega) &\mbox{ if } s\in (\frac{1}{2}, 1).
\end{array}
\right.
\end{equation*}
The local counterpart of problem \eqref{P0}  corresponds to the celebrated boundary value problem 
\begin{equation}\label{APP}
\left\{
\begin{array}{ll}
-\Delta u= f(x, u)+t\varphi_{1}+h &\mbox{ in } \Omega\\
u=0 &\mbox{ on } \partial \Omega
\end{array}
\right.
\end{equation}
studied by Ambrosetti and Prodi \cite{AP} in $1972$. The authors in \cite{AP} proved that if  $f(x, u)=f(u)$ is a $C^{2}$ strictly convex function verifying $f(0)=0$, $0<f'(-\infty)<\lambda_{1}<f'(\infty)<\lambda_{2}$ and $g(x)=t \varphi_{1}+h$ with $h\in C^{0, \alpha}(\bar{\Omega})$, then there exists a closed connected $C^{1}$ manifold $M$ of codimension $1$ in the space $C^{0, \alpha}(\bar{\Omega})$ which splits the space into two connected components $S_{0}$ and $S_{2}$ such that, if $g\in S_{0}$ then \eqref{APP} has no solution, if $g\in M$ then \eqref{APP} has exactly one solution, and if $g\in S_{2}$ then \eqref{APP} has exactly two solutions. Their proof is obtained by using results on differentiable mappings with singularities.  
Subsequently, different variants and formulations of this problem have been extensively studied by many authors.
For more details, we refer to \cite{AH, ARu, BP,  Chang, Dancer, DF1, DF2, DFS, Hess, KW} where several existence and multiplicity results have been obtained for various Ambrosetti-Prodi type problems by exploiting suitable and various topological and variational methods.


Recently there has been an increasing interest in the study of nonlinear partial differential equations involving fractional and nonlocal operators of elliptic type. These operators arise in a great number of applications such as phase transitions, image processing, mathematical finance, optimization, quantum mechanics and obstacle problem.
The literature on fractional and nonlocal operators and on their applications is so huge that we do not even try to collect here a detailed bibliography. Anyway, we refer the interested reader to \cite{DPV, MBRS} for an elementary introduction on this subject. \\
Motivated by the interest shared by the mathematical community in this topic, in the present paper, we consider a fractional version of the Ambrosetti-Prodi problem involving the fractional Dirichlet  Laplacian. In particular, we are interested in finding solutions for \eqref{P0} when $f$ has linear growth, that is
\begin{equation}\label{f2}
|f(x, t)|\leq c_{1}|t|+c_{2}  \mbox{ for any } x\in \bar{\Omega}, t\in \R,
\end{equation}
for some $c_{1}, c_{2}>0$, 
or when $f$ satisfies the following conditions: 
\begin{equation}\label{f3}
|f(x, t)|\leq c_{3}|t|^{p-1}+c_{4}  \mbox{ for any } x\in \bar{\Omega}, t\in \R,
\end{equation}
with $c_{3}, c_{4}>0$ are constants and  $2\leq p<2^{*}_{s}:=\frac{2N}{N-2s}$,
and $f$ verifies the Ambrosetti-Rabinowitz condition \cite{AR}, that is
\begin{equation}\label{f4}
\mbox{ there exist } \theta>2, r>0  \mbox{ such that } 
0<\theta F(x, t)\leq t f(x, t) \mbox{ for any } x\in \Omega, t\geq r,
\end{equation}
where  $F(x, \tau)=\int_{0}^{\tau} f(x, \xi) d\xi$.\\
Our main result can be stated as follows:
\begin{thm}\label{thm1}
Assume that $f$ satisfies either \eqref{f1}-\eqref{f3} or \eqref{f1}-\eqref{f3}-\eqref{f4}.
Then there exists $t_{0}\in \R$ such that for any $t\leq t_{0}$, the problem \eqref{P0} admits at least two solutions in $C^{0, \alpha}(\bar{\Omega})$, for some $\alpha\in (0,1)$. 
\end{thm}

The proof of the Theorem \ref{thm1} is obtained by applying critical point theory after transforming \eqref{P0} into an elliptic nonlinear Neumann problem in a half-cylinder via a suitable variant of the extension method introduced by Caffarelli and Silvestre in \cite{CS}. This approach has been brilliantly used by several authors to study existence, multiplicity, regularity and symmetry properties of solutions for different fractional problems (in $\R^{N}$ or in bounded domains) involving subcritical and critical nonlinearities; see  for instance \cite{AM, A22, A30, BCDPS, BrCDPS, CT, DMV, YYY}. 
Taking into account this fact, instead of \eqref{P0} we are lead to consider the following degenerate elliptic problem with a nonlinear Neumann boundary condition
\begin{equation}\label{P}
\left\{
\begin{array}{ll}
-\dive(y^{1-2s} \nabla U)= 0 &\mbox{ in } \C:=\Omega\times (0, \infty)\\
U=0 &\mbox{ on } \partial_{L} \C:=\partial \Omega\times [0, \infty) \\
\frac{\partial U}{\partial \nu^{1-2s}}=\kappa_{s}[f(x, u)+t \varphi_{1}+h] &\mbox{ on } \partial^{0} \C:=\Omega\times \{0\},
\end{array}
\right.
\end{equation}
where $\kappa_{s}$ is a suitable positive constant (see \cite{BCDPS, BrCDPS, CT, CDDS, ST}) and $u$ is the trace of $U$ (see Section $2$). For simplicity, we will assume that $\kappa_{s}=1$.\\
Due to the variational structure of the problem \eqref{P}, we introduce the following Euler-Lagrange functional $\Phi: \X\rightarrow \R$ associated to \eqref{P}, that is
\begin{equation}\label{phi}
\Phi(U)=\frac{1}{2} \|U\|^{2}-\int_{\Omega} F(x, u) \,dx-\int_{\Omega} (t \varphi_{1}+h)u \,dx,
\end{equation}
where $\X$ is defined as 
$$
H^{1}_{0,L}=\overline{C_{c}^{\infty}(\Omega\times[0, \infty))}^{\|\cdot\|}
$$
endowed with the norm
$$
\|U\|^{2}=\iint_{\mathcal{C}} y^{1-2s} |\nabla U|^{2} \, dx dy;
$$
see Section $2$ below and \cite{BCDPS, BrCDPS, CT, CDDS} for more details.\\
Then it is clear that solutions to \eqref{P} can be obtained as critical points of $\Phi$.
We recall that  $U\in \X$ is a weak solution to \eqref{P} if for any $\Psi\in \X$ it holds
$$
\iint_{\C} y^{1-2s}\nabla U \nabla \Psi \, dx dy= \int_{\Omega} [g(x, u)+t \varphi_{1}+h] \psi \, dx. 
$$
Borrowing some ideas used in \cite{DFS}, we show that \eqref{P} admits at least two solutions. 
Firstly, we prove the existence of a subsolution for any $t\in \R$, and a supersolution for any $t$ sufficiently small. After that, we apply some abstract critical point results \cite{DFS} to deduce the existence of a local minimizer of $\Phi$ in $\X$. This will be done looking for critical points  of $\Phi$ restricted to a suitable closed convex subset of $\X$. Let us point out that in the the proofs that we perform, we have to take care of the trace term, so a more careful analysis  is needed with respect to the classic case in order to overcome the difficulties coming from the non-local character of $(-\Delta_{\D})^{s}$. 
Then, applying a variant of the  Mountain Pass Theorem \cite{AR}, we deduce the existence of a second solution to \eqref{P} different from the previous one. We also note that it is possible to obtain a nonexistence result to \eqref{P} for $t$ large (see Remark \ref{REM3.1}).\\
In the second part of this work, we consider a periodic version of the problem \eqref{P0}, namely 
\begin{equation}\label{Pp}
\left\{
\begin{array}{ll}
(-\Delta+m^{2})^{s} u = f(x, u)+t+h \, &\mbox{ in } \, (-\pi, \pi)^{N},\\
u(x+2\pi e_{i})=u(x) \, &\mbox{ for all } \, x\in\R^{N}, i=1, \dots, N
\end{array}
\right.
\end{equation}
where $m>0$, $s\in (0, 1)$, $N>2s$, $h\in L^{\infty}((-\pi, \pi)^{N})$ verifies $\int_{(-\pi, \pi)^{N}} h \, dx=0$ and $f: \R^{N}\times \R \rightarrow \R$ is a locally Lipschitz function, $2\pi$-periodic in $x$, such that
\begin{equation}\label{f11}
\limsup_{t\rightarrow -\infty} \frac{f(x, t)}{t}<m^{2s}<\liminf_{t\rightarrow \infty} \frac{f(x, t)}{t} \quad \mbox{ uniformly for a.e. } x\in (-\pi, \pi)^{N},
\end{equation}
and satisfying either \eqref{f2} or \eqref{f3}-\eqref{f4}.
Here, the fractional spectral operator $(-\Delta+m^{2})^{s}$ is a non-local operator which can be defined for any $u=\sum_{k\in \Z^{N}} c_{k} \frac{e^{\imath k\cdot x}}{(2\pi)^{\frac{N}{2}}}\in \mathcal{C}^{\infty}_{2\pi}(\R^{N})$, that is $u$ is infinitely differentiable in $\R^{N}$ and $2\pi$-periodic in each variable, by setting
\begin{equation}\label{nfrls}
(-\Delta+m^{2})^{s} u(x)=\sum_{k\in \Z^{N}} c_{k} (|k|^{2}+m^{2})^{s} \, \frac{e^{\imath k\cdot x}}{(2\pi)^{\frac{N}{2}}}
\end{equation}
where
$c_{k}:=\frac{1}{(2\pi)^{\frac{N}{2}}} \int_{(-\pi,\pi)^{N}} u(x)e^{- \imath k \cdot x}dx \quad (k\in \Z^{N})$
are the Fourier coefficients of the function $u$.\\
This operator can be extended by density on the Hilbert space
$$
\h:=\left\{u=\sum_{k\in \Z^{N}} c_{k} \frac{e^{\imath k\cdot x}}{(2\pi)^{\frac{N}{2}}}\in L^{2}(-\pi,\pi)^{N}: \sum_{k\in \Z^{N}} (|k|^{2}+m^{2})^{s} \, |c_{k}|^{2}<+\infty \right\}
$$
endowed with the norm
$$
|u|_{\h}:=\left(\sum_{k\in \Z^{N}} (|k|^{2}+m^{2})^{s} |c_{k}|^{2}\right)^{1/2}.
$$
Then, using the fact that the extension technique works again in periodic setting \cite{A1, A2, RS, ST}, and following the ideas above discussed, we are able to prove the next result. 
\begin{thm}\label{thm2}
Assume that $f$ satisfies either \eqref{f11}-\eqref{f3} or \eqref{f11}-\eqref{f3}-\eqref{f4}.
Then there exists $t_{0}\in \R$ such that for any $t\leq t_{0}$, the problem \eqref{Pp} admits at least two solutions in $C^{0, \alpha}([-\pi, \pi]^{N})$, for some $\alpha\in (0,1)$. 
\end{thm}
\noindent
As far as we know, these are the first results for Ambrosetti-Prodi type problems in the fractional  framework.
The paper is organized as follows. In Section $2$ we give some results about the involved fractional Sobolev spaces, and we recall some useful critical point results. In Section $3$ we prove the existence of sub and supersolutions to \eqref{P} and we show that the functional $\Phi$ satisfies the Palais-Smale condition. In Section $4$ we give the proof of Theorem \ref{thm1}. In the last Section we discuss a fractional periodic version of the Ambrosetti-Prodi problem.

\section{preliminaries}
\noindent
In this section we collect some useful notations and basic results which will be useful along the paper. For more details we refer to \cite{BrCDPS, CT, CDDS}.\\
We use the notation $|\cdot|_{q}$ to denote the $L^{q}(\Omega)$-norm of any measurable function $u:\Omega\rightarrow \R$.\\
Fix $s\in (0, 1)$.
We say that $u\in H^{s}(\Omega)$ if $u\in L^{2}(\Omega)$ and it holds
$$
[u]^{2}=\iint_{\Omega\times \Omega} \frac{|u(x)-u(y)|^{2}}{|x-y|^{N+2s}} \, dx dy<\infty.
$$
We define $H^{s}_{0}(\Omega)$ as the closure of $C^{\infty}_{c}(\Omega)$ with respect to the norm $[u]^{2}+|u|_{2}^{2}$. The space $H^{\frac{1}{2}}_{00}(\Omega)$ is the Lions-Magenes space  which consists of the function $u\in H^{\frac{1}{2}}(\Omega)$ such that
$$
\int_{\Omega} \frac{u^{2}(x)}{dist(x, \partial \Omega)}\, dx<\infty.
$$
Let us introduce the following Hilbert space
$$
\A=\Bigl\{u\in L^{2}(\Omega): \sum_{k=1}^{\infty} |c_{k}|^{2}\lambda_{k}^{s}<\infty\Bigr\}.
$$
It is well-known \cite{LM} that by interpolation it results 
\begin{equation*}
\A=
\left\{
\begin{array}{ll}
H^{s}(\Omega) &\mbox{ if } s\in (0, \frac{1}{2})\\
H^{\frac{1}{2}}_{00}(\Omega) &\mbox{ if } s=\frac{1}{2} \\
H^{s}_{0}(\Omega) &\mbox{ if } s\in (\frac{1}{2}, 1).
\end{array}
\right.
\end{equation*}
Now, let $D\subset \R^{N+1}$ be an open set. We say that a measurable function $U: D \rightarrow \R$ belongs to the weighthed space $L^{2}(D, |y|^{1-2s})$ if $U$ verifies the following condition
$$
\iint_{D} |y|^{1-2s} U^{2}(x,y)\, dx dy<\infty.
$$
We say that $U\in H^{1}(D, |y|^{1-2s})$ if $U\in L^{2}(D, |y|^{1-2s})$ and its weak derivatives $\nabla U$ exist and belong to $L^{2}(D, |y|^{1-2s})$. It is clear that $H^{1}(D, |y|^{1-2s})$ is a Hilbert space with the inner product 
$$
\langle U, V\rangle=\iint_{D} |y|^{1-2s} (\nabla U \nabla V+UV)\, dx dy,
$$
and $C^{\infty}(D)\cap H^{1}(D, |y|^{1-2s})$ is dense in $H^{1}(D, |y|^{1-2s})$ (see for instance Proposition $2.1.2$ and Corollary $2.1.6$ in \cite{T}). \\
Let $\Omega\subset \R^{N}$ be a smooth bounded domain. Let us define $\C=\Omega\times (0, \infty)$ the half-cylinder of basis $\Omega$, and we denote by $\partial_{L}\C=\partial \Omega\times [0, \infty)$ its lateral boundary and we set $\partial^{0}\C=\Omega\times \{0\}$. \\ 
We define $\X$ as the completion of the functions $U\in C^{\infty}_{c}(\Omega\times [0, \infty))$ with respect to the norm
$$
\|U\|^{2}=\iint_{\C} y^{1-2s} |\nabla U|^{2} \, dx dy.
$$
Now, we recall the following trace theorem which relates $\X$ to $\A$.
\begin{thm}\cite{BrCDPS, CT, CDDS}\label{tracethm}
There exists a surjective continuous linear map $\textup{Tr}: \X\rightarrow \A$ such that, for any $U\in \X$
$$
\kappa_{s} |\textup{Tr}(U)|^{2}_{\A}\leq \|U\|^{2}.
$$
\end{thm}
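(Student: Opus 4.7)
The plan is to construct an explicit minimal-energy extension via the spectral basis, compute its energy, and deduce both surjectivity and the trace inequality from a single orthogonality computation.

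First I would resolve the problem eigenfunction by eigenfunction. For each eigenpair $(\lambda_k,\varphi_k)$ of the Dirichlet Laplacian, seek a separated-variable solution $V_k(x,y)=\varphi_k(x)\theta_k(y)$ of
$$
-\dive(y^{1-2s}\nabla V_k)=0 \text{ in } \C,\quad V_k=0 \text{ on } \partial_L\C,\quad V_k(\cdot,0)=\varphi_k,
$$
which reduces to the ODE $\theta_k''+\tfrac{1-2s}{y}\theta_k'-\lambda_k\theta_k=0$ with $\theta_k(0)=1$ and $\theta_k(y)\to 0$ as $y\to\infty$. The bounded solution is given explicitly via modified Bessel functions, and by the scaling $\theta_k(y)=\theta_1(\sqrt{\lambda_k}\,y)$ a direct integration by parts gives
$$
\iint_\C y^{1-2s}|\nabla V_k|^{2}\,dx\,dy = -\lim_{y\to 0^{+}} y^{1-2s}\theta_k'(y)\,|\varphi_k|_{2}^{2}=\kappa_s\lambda_k^{s},
$$
for a positive constant $\kappa_s$ depending only on $s$.

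Second, for arbitrary $u=\sum_k c_k\varphi_k\in\A$, I would define $U(x,y)=\sum_k c_k V_k(x,y)$. Orthogonality of $\{\varphi_k\}$ in $L^{2}(\Omega)$ (and of their gradients in the dual pairing with $-\Delta\varphi_k=\lambda_k\varphi_k$) yields
$$
\|U\|^{2}=\sum_k |c_k|^{2}\kappa_s\lambda_k^{s}=\kappa_s|u|_{\A}^{2},
$$
so the series converges in the weighted $H^{1}$ norm and satisfies $\textup{Tr}(U)=u$. Since each $V_k$ vanishes on the lateral boundary and the $\theta_k$ decay exponentially at infinity, a standard cutoff-and-mollification argument places $U$ in $\X$. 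This immediately furnishes the surjectivity of $\textup{Tr}$ with the sharp constant.

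Third, for the inequality I would argue by density. Fix $U\in C^{\infty}_{c}(\Omega\times[0,\infty))$ (dense in $\X$ by definition) and let $u=U(\cdot,0)\in C^{\infty}_{c}(\Omega)\subset\A$. Writing $U=U^{*}+W$, where $U^{*}$ is the spectral extension of $u$ built above and $W=U-U^{*}$, one has $W=0$ on $\partial^{0}\C$ and $W=0$ on $\partial_L\C$. The weak equation $-\dive(y^{1-2s}\nabla U^{*})=0$ together with these vanishing conditions produces, after Green's identity, the orthogonality
$$
\iint_\C y^{1-2s}\nabla U^{*}\cdot\nabla W\,dx\,dy=0,
$$
hence $\|U\|^{2}=\|U^{*}\|^{2}+\|W\|^{2}\geq\kappa_s|u|_{\A}^{2}$. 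Passing to the limit along $C^{\infty}_{c}$ approximations extends both the trace map and the inequality to all of $\X$.

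The main obstacle is making the orthogonality step rigorous: one must verify that $U^{*}$, defined only as a convergent series in the weighted $H^{1}$ space, can legitimately be tested against smooth compactly supported functions via Green's identity, controlling the boundary term at $y=0$ through $W(\cdot,0)=0$ and the boundary term at $y=\infty$ through the exponential decay of the $\theta_k$. This in turn requires precise asymptotics of $\theta_k$ at $0$ and at $\infty$, and uniform $H^{1}$-convergence of the partial sums of $U^{*}$ on every compact subset of $\C$; once these ingredients are in place, the rest of the argument is routine.
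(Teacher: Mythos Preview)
The paper does not supply its own proof of this theorem: it is stated with attribution to \cite{BrCDPS, CT, CDDS} and no argument is given beyond later recalling the explicit spectral extension formula $U(x,y)=\sum_k c_k\varphi_k(x)\theta(\sqrt{\lambda_k}\,y)$ and the identity $\|U\|^{2}=\kappa_s|u|_{\A}^{2}$. Your proposal is correct and is precisely the standard argument found in those references---separation of variables on each eigenmode, the scaling computation yielding $\|V_k\|^{2}=\kappa_s\lambda_k^{s}$, surjectivity via the spectral extension, and the trace inequality via the orthogonal splitting $U=U^{*}+W$---so there is nothing to contrast.
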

\noindent
We also have some useful embedding results.
\begin{thm}\cite{BrCDPS, CT, CDDS}\label{Sembedding}
Let $N> 2s$ and $q\in [1, 2^{*}_{s}]$. Then there exists a constant $C$ depending on $N$, $q$ and the measure of $\Omega$, such that, for all $U\in \X$
$$
|u|_{q}\leq C \|U\|.
$$
Moreover, $\X$ is compactly embedded into $L^{\nu}(\Omega)$ for any $\nu\in [1, 2^{*}_{s})$.
\end{thm}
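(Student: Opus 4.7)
The plan is to factor the desired embedding $\X \hookrightarrow L^{q}(\Omega)$ through the trace space $\A$, where I can invoke classical fractional Sobolev embeddings. First I would apply Theorem \ref{tracethm}: for any $U \in \X$, the trace $u = \textup{Tr}(U)$ lies in $\A$ with $|u|_{\A} \leq \kappa_{s}^{-1/2}\|U\|$. This reduces the question to proving $\A \hookrightarrow L^{q}(\Omega)$, which follows from known facts about fractional Sobolev spaces once we use the identification of $\A$ with $H^{s}(\Omega)$, $H^{1/2}_{00}(\Omega)$, or $H^{s}_{0}(\Omega)$ recalled from \cite{LM}.

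For the continuous inclusion into $L^{2^{*}_{s}}(\Omega)$, I would invoke the fractional Sobolev inequality. When $s \in (1/2,1)$ one extends $u \in H^{s}_{0}(\Omega)$ by zero outside $\Omega$ to obtain a function in $H^{s}(\R^{N})$ with controlled Gagliardo seminorm, and then applies the Sobolev embedding $H^{s}(\R^{N}) \hookrightarrow L^{2^{*}_{s}}(\R^{N})$. When $s \in (0,1/2)$ one instead uses an extension operator $H^{s}(\Omega) \to H^{s}(\R^{N})$, available because $\Omega$ is smooth and bounded. In either case we get $|u|_{2^{*}_{s}} \leq C\,|u|_{\A}$, and composing with the trace estimate delivers $|u|_{2^{*}_{s}} \leq C\|U\|$. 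The intermediate values $q \in [1, 2^{*}_{s}]$ are then obtained by Hölder's inequality, since $|\Omega|<\infty$ forces $L^{2^{*}_{s}}(\Omega) \hookrightarrow L^{q}(\Omega)$; here the dependence of $C$ on $|\Omega|$ enters.

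For the compactness assertion, the strategy is parallel. The Rellich–Kondrachov type theorem for fractional Sobolev spaces ensures that $\A$ embeds compactly into $L^{\nu}(\Omega)$ for every $\nu \in [1, 2^{*}_{s})$. Consequently, given a bounded sequence $\{U_{n}\}\subset \X$, the trace estimate yields a bounded sequence $\{u_{n}\}\subset \A$, from which one can extract a subsequence converging strongly in $L^{\nu}(\Omega)$. The linearity and continuity of $\textup{Tr}$ ensure that the selected subsequence can be realized as the trace of a weakly convergent subsequence in $\X$, which is what is needed.

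The main obstacle I expect is the borderline case $s = 1/2$, where $\A = H^{1/2}_{00}(\Omega)$ is \emph{strictly smaller} than $H^{1/2}_{0}(\Omega)$ and carries the additional weighted-integrability term $\int_{\Omega} u^{2}/\textup{dist}(x,\partial\Omega)\,dx$. One must verify that extension by zero still produces an element of $H^{1/2}(\R^{N})$ whose Gagliardo seminorm is bounded by the Lions–Magenes norm, and this requires a careful estimate near $\partial\Omega$ using precisely the distance-weighted term. Once this delicate case is handled via the interpolation characterization of \cite{LM}, both the continuous and compact embeddings follow by the scheme above.
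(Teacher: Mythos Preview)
The paper does not give its own proof of this theorem: it is stated with the citations \cite{BrCDPS, CT, CDDS} and used as a known result, so there is nothing in the text to compare your argument against. Your outline---factor the embedding through the trace map of Theorem~\ref{tracethm} and then invoke the classical fractional Sobolev and Rellich--Kondrachov embeddings for $\A$---is precisely the standard route taken in those references, and it is correct. The concern you flag at $s=1/2$ is real but is exactly what the Lions--Magenes weighted term is designed to absorb, and the cited sources handle it; no additional idea is needed beyond what you sketched.
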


\noindent
Thus, we get the following fundamental result which allows us to realize the fractional Dirichlet operator.
\begin{thm}\cite{BrCDPS, CT, CDDS}
Let $u\in \A$. Then there exists a unique $U\in \X$ such that 
\begin{equation*}
\left\{
\begin{array}{ll}
-\dive(y^{1-2s} \nabla U)= 0 &\mbox{ in } \C \\
U=0 &\mbox{ on } \partial_{L} \C\\
\textup{Tr}(U)=u &\mbox{ on } \partial^{0} \C.
\end{array}
\right.
\end{equation*}
Moreover
$$
\frac{\partial U}{\partial \nu^{1-2s}}:=-\lim_{y\rightarrow 0^{+}} y^{1-2s} U_{y}(x, y)=\kappa_{s}(-\Delta_{\D})^{s} u(x) \mbox{ in } \A^{*},  
$$
where $\A^{*}$ is the dual of $\A$. The function $U$ is called the extension of $u$.
\end{thm}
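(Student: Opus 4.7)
The goal is to prove existence and uniqueness of the $s$-harmonic extension $U\in \X$ of a given $u\in \A$, plus the identification of its conormal trace with $\kappa_{s}(-\Delta_{\D})^{s}u$ in $\A^{*}$. The plan combines a variational existence argument with a spectral (separation of variables) computation.

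\textbf{Step 1 (Existence and uniqueness of $U$).} I would set up the extension as the minimizer of
\[
E(W)=\tfrac{1}{2}\iint_{\C} y^{1-2s}|\nabla W|^{2}\,dxdy
\]
on the affine set $\mathcal{A}_{u}=\{W\in \X : \textup{Tr}(W)=u\}$. By Theorem \ref{tracethm}, $\textup{Tr}:\X\to\A$ is surjective, hence $\mathcal{A}_{u}\neq\emptyset$, and it is closed and convex because $\textup{Tr}$ is continuous and linear. Since $E$ is strictly convex, coercive and weakly lower semicontinuous with respect to $\|\cdot\|$ (it is essentially half the squared norm), the direct method yields a unique minimizer $U\in\mathcal{A}_{u}$. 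Writing the Euler-Lagrange equation along admissible perturbations $\Psi\in \X$ with $\textup{Tr}(\Psi)=0$ gives
\[
\iint_{\C} y^{1-2s}\nabla U\cdot\nabla\Psi\,dxdy=0,
\]
which is the weak form of $-\dive(y^{1-2s}\nabla U)=0$ in $\C$; the condition $U=0$ on $\partial_{L}\C$ is already encoded in the space $\X$, and $\textup{Tr}(U)=u$ holds by construction.

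\textbf{Step 2 (Spectral representation).} To access the normal derivative I would identify $U$ explicitly. Write $u=\sum_{k\geq 1} c_{k}\varphi_{k}$, and look for $U$ of the form
\[
U(x,y)=\sum_{k=1}^{\infty} c_{k}\,\varphi_{k}(x)\,\psi_{s}(\sqrt{\lambda_{k}}\,y),
\]
where $\psi_{s}$ is the universal profile solving the Bessel-type ODE
\[
\psi_{s}''(t)+\tfrac{1-2s}{t}\psi_{s}'(t)-\psi_{s}(t)=0,\qquad \psi_{s}(0)=1,\quad \psi_{s}(+\infty)=0.
\]
Separation of variables together with $-\Delta\varphi_{k}=\lambda_{k}\varphi_{k}$ shows that this product ansatz solves the degenerate equation $-\dive(y^{1-2s}\nabla U)=0$ termwise. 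The decaying bounded solution is $\psi_{s}(t)=c_{s}\,t^{s}K_{s}(t)$ (modified Bessel function) normalized so that $\psi_{s}(0)=1$. A change of variable $t=\sqrt{\lambda_{k}}\,y$ and the orthonormality of $\{\varphi_{k}\}$ in $L^{2}(\Omega)$ give
\[
\|U\|^{2}=\sum_{k=1}^{\infty} c_{k}^{2}\lambda_{k}^{s}\int_{0}^{\infty} t^{1-2s}\bigl(|\psi_{s}'(t)|^{2}+|\psi_{s}(t)|^{2}\bigr)dt=\kappa_{s}\sum_{k=1}^{\infty}c_{k}^{2}\lambda_{k}^{s}=\kappa_{s}|u|_{\A}^{2},
\]
so the partial sums converge in $\X$ and the full series defines an element of $\X$ with trace $u$. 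By the uniqueness proved in Step 1, this series representation is exactly the minimizer $U$.

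\textbf{Step 3 (Conormal derivative in $\A^{*}$).} For any $v=\sum d_{k}\varphi_{k}\in\A$ with $s$-harmonic extension $V\in \X$, I would test the weak formulation:
\[
\iint_{\C} y^{1-2s}\nabla U\cdot\nabla V\,dxdy=\sum_{k=1}^{\infty}c_{k}d_{k}\lambda_{k}^{s}\int_{0}^{\infty}t^{1-2s}\bigl(|\psi_{s}'(t)|^{2}+|\psi_{s}(t)|^{2}\bigr)dt=\kappa_{s}\sum_{k=1}^{\infty}\lambda_{k}^{s}c_{k}d_{k}.
\]
The right-hand side is precisely $\kappa_{s}\langle (-\Delta_{\D})^{s}u,v\rangle_{\A^{*},\A}$. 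On the other hand, an integration by parts (justified by approximating $U,V$ with smooth truncated functions and passing to the limit using $\|U\|,\|V\|<\infty$) yields
\[
\iint_{\C} y^{1-2s}\nabla U\cdot\nabla V\,dxdy=\int_{\Omega}\Bigl(-\lim_{y\to 0^{+}} y^{1-2s}U_{y}(x,y)\Bigr)\,v\,dx,
\]
where the boundary limit must be interpreted in the dual $\A^{*}$. Matching the two expressions for every $v\in\A$ gives the claimed identity
\[
\frac{\partial U}{\partial \nu^{1-2s}}=\kappa_{s}(-\Delta_{\D})^{s}u\quad\text{in }\A^{*}.
\]

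\textbf{Main difficulty.} The analytically delicate point is Step 3: the limit $-\lim_{y\to 0^{+}} y^{1-2s}U_{y}$ does not in general exist pointwise for a generic $u\in\A$, so one must work entirely with the weak formulation and identify $\partial U/\partial\nu^{1-2s}$ as an element of $\A^{*}$ via duality. Termwise, the asymptotics $K_{s}(t)\sim \frac{\Gamma(s)}{2}(t/2)^{-s}$ as $t\to 0^{+}$ show that $-\lim_{t\to 0^{+}} t^{1-2s}\psi_{s}'(t)$ produces the universal constant $\kappa_{s}$ appearing in Step 2, which ties together the normalization and makes all three identities consistent. The subsidiary technical point is the integration by parts in the weighted space $H^{1}(\C,y^{1-2s})$, which requires truncation in $y$ and a density argument together with the trace Theorem \ref{tracethm}.
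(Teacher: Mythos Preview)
The paper does not supply its own proof of this theorem: it is quoted from \cite{BrCDPS, CT, CDDS} and stated without argument. Immediately after the statement, the paper does record the explicit form of the extension,
\[
U(x,y)=\sum_{k=1}^{\infty} c_{k}\,\varphi_{k}(x)\,\theta(\sqrt{\lambda_{k}}\,y),
\]
with $\theta$ solving $\theta''+\tfrac{1-2s}{y}\theta'-\theta=0$, $\theta(0)=1$, $-\lim_{y\to 0^{+}}y^{1-2s}\theta'(y)=\kappa_{s}$, and the identity $\|U\|^{2}=\kappa_{s}|u|_{\A}^{2}$, again referring to the same sources.

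Your proposal is correct and is precisely the argument behind those references: a direct-method existence/uniqueness in Step~1, the spectral construction of Step~2 (your $\psi_{s}$ is the paper's $\theta$), and the dual identification of the conormal trace in Step~3. In particular, your energy computation $\|U\|^{2}=\kappa_{s}\sum_{k}\lambda_{k}^{s}c_{k}^{2}$ is exactly the identity the paper records, and your observation that the limit $-\lim_{y\to 0^{+}}y^{1-2s}U_{y}$ must be interpreted weakly in $\A^{*}$ is the right way to read the ``Moreover'' clause. There is nothing to compare against here beyond noting that your sketch matches the standard proof in the cited literature and the formulas the paper quotes.
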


\noindent
For simplicity, we will use the notation $u$ instead of $\textup{Tr}(U)$ for any function $U$ in $\X$. 
We also recall that if $u=\sum_{k=1}^{\infty} c_{k} \varphi_{k}\in \A$, the extension of $u$ is given by 
$$
U(x, y)=\sum_{k=1}^{\infty} c_{k} \varphi_{k}(x) \theta(\sqrt{\lambda_{k}} y),
$$
where $\theta\in H^{1}(\R_{+}, y^{1-2s})$ solves the problem 
\begin{equation*}
\left\{
\begin{array}{ll}
\theta''+\frac{1-2s}{y}\theta'-\theta= 0 &\mbox{ in } \R_{+}\\
\theta(0)=1, \mbox{ and } -\lim_{y\rightarrow 0^{+}} y^{1-2s}\theta'(y)=\kappa_{s}.
\end{array}
\right.
\end{equation*}
In addition, $\|U\|^{2}=\kappa_{s}|u|^{2}_{\A}$; see \cite{BrCDPS, CT, CDDS} for more details.

\noindent
Now, we prove the following result concerning the H\"older continuity of solutions to \eqref{P}. 
\begin{lem}\label{boundlem}
Let $U\in \X$ be a weak solution to 
\begin{equation}\label{Pg}
\left\{
\begin{array}{ll}
-\dive(y^{1-2s} \nabla U)= 0 &\mbox{ in } \C\\
U=0 &\mbox{ on } \partial_{L} \C\\
\frac{\partial U}{\partial \nu^{1-2s}}=g(x, u) &\mbox{ on } \partial^{0} \C,
\end{array}
\right.
\end{equation}
where $|g(x, t)|\leq C_{0}(1+|t|^{q})$ in $\Omega\times \R$, for some $1\leq q\leq 2^{*}_{s}-1$. 
Then $u\in L^{\infty}(\Omega)$ and there exists $M\in C(\R_{+})$, depending only on $C_{0}$, $N$, and $\Omega$, such that $|u|_{\infty}\leq M(|u|_{2^{*}_{s}})$. Moreover, $U\in C^{0, \alpha}(\bar{\C})$ and $u\in C^{0, \alpha}(\bar{\Omega})$, for some $\alpha\in (0, 1)$.
\end{lem}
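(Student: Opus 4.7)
The plan is to adapt the Brezis--Kato / Moser iteration scheme to the weighted setting exactly as in \cite{BCDPS}, and then upgrade to Hölder regularity by invoking the weighted De Giorgi--Nash--Moser theory together with the boundary regularity theory available for the Caffarelli--Silvestre extension problem with bounded Neumann data. Since the weight $y^{1-2s}$ belongs to the Muckenhoupt $A_2$ class, the classical weighted Sobolev machinery is available on $\C$.

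For the $L^\infty$ bound, I would test the weak formulation of \eqref{Pg} against $\Psi = U\,\min(|U|^{2(\beta-1)},K^{2(\beta-1)})$ with $\beta\geq 1$ and a cut-off level $K>0$; this is admissible in $\X$ since it vanishes on $\partial_L\C$. The left-hand side controls
$$\iint_\C y^{1-2s}\bigl|\nabla\bigl(U\min(|U|^{\beta-1},K^{\beta-1})\bigr)\bigr|^2\,dx\,dy,$$
which, by Theorem \ref{tracethm} followed by Theorem \ref{Sembedding}, bounds from below the squared $L^{2^{*}_{s}}$-norm of the trace of the truncated power. On the right-hand side, the subcritical growth $|g(x,u)|\leq C_0(1+|u|^{2^{*}_{s}-1})$, together with the splitting $|u|^{2^{*}_{s}+2\beta-2}=|u|^{2^{*}_{s}-2}\cdot|u|^{2\beta}$ and Hölder's inequality, yields an upper bound that can be partly absorbed into the left-hand side thanks to the smallness of $|u|^{2^{*}_{s}-2}$ on large level sets (the standard Brezis--Kato trick). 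Choosing $\beta_n=(2^{*}_{s}/2)^n$, iterating, and letting $K\to\infty$ produces a recursive estimate of the form
$$|u|_{2^{*}_{s}\beta_n}\leq C^{1/\beta_n}\,|u|_{2^{*}_{s}\beta_{n-1}},$$
from which $|u|_\infty\leq M(|u|_{2^{*}_{s}})$ follows with $M$ continuous.

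Once the $L^\infty$ estimate is in hand, the Neumann datum $g(x,u)$ is bounded, and $U$ solves a linear degenerate elliptic equation with $A_2$ weight, homogeneous Dirichlet data on $\partial_L\C$, and bounded data on $\partial^0\C$. Interior Hölder continuity of $U$ then follows from the classical Fabes--Kenig--Serapioni weighted De Giorgi--Nash--Moser theory. Hölder regularity up to $\partial^0\C$ follows from the boundary regularity theory for the extension problem with bounded right-hand side, as developed in \cite{CabSir, BCDPS} and related works. Near $\partial_L\C$, the homogeneous Dirichlet condition combined with the smoothness of $\partial\Omega$, a standard flattening of the boundary, and a barrier argument produce Hölder continuity. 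Together these yield $U\in C^{0,\alpha}(\bar\C)$ and hence $u=\textup{Tr}(U)\in C^{0,\alpha}(\bar\Omega)$ for some $\alpha\in(0,1)$.

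The main obstacle is the careful bookkeeping in the Moser iteration: the weighted gradient norm on $\C$ only controls the \emph{trace} $u$ in $L^{2^{*}_{s}}(\Omega)$, so at each iterative step one must apply the trace inequality and the Sobolev embedding in tandem to pass from weighted $H^1$-norms of powers of $U$ on $\C$ to $L^p$-norms of the corresponding powers of $u$ on $\Omega$. The subcritical assumption $q\leq 2^{*}_{s}-1$ is precisely what makes this closure possible and is the only quantitative input from $g$ beyond its continuity.
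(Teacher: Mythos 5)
Your proposal is correct and follows essentially the same route as the paper: both run a Moser/Brezis--Kato iteration on the weighted extension with truncated power test functions, pass from $\iint_\C y^{1-2s}|\nabla(\cdot)|^2$ to $|{\cdot}|_{2^*_s}$ via the trace theorem and Sobolev embedding, and then upgrade to H\"older continuity using the $A_2$-weighted De~Giorgi--Nash--Moser and extension regularity theory once $g(\cdot,u)$ is known to be bounded. The paper phrases the iteration slightly differently (it factors $a(x)=|g(x,u)|/(1+|u|)\in L^{N/2s}$ and cites Proposition 5.1, Theorem 4.7 and Corollary 4.8 of \cite{BCDPS} as black boxes for the $L^r$, $L^\infty$ and $C^{0,\alpha}$ steps, respectively), but the underlying mechanism is the one you describe.
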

\begin{proof}
For $\beta>0$ and $T\geq 1$, we define $U_{T}=\min\{|U|, T\}$ and we take $UU_{T}^{2\beta}\in \X$ as test function in \eqref{Pg}. Then a direct calculations yields 
\begin{align*}
&\iint_{\C} y^{1-2s} U_{T}^{2\beta}|\nabla U|^{2}dx dy+2\beta\iint_{D_{T}} y^{1-2s}|U|^{2\beta}|\nabla U|^{2}dx dy\\
&=\iint_{\C} y^{1-2s} \nabla U \nabla (UU^{2\beta}_{T}) dx dy\\
&=\int_{\Omega} g(x, u)uu^{2\beta}_{T}dx \\
&\leq \int_{\Omega} a(x)(1+|u|)^{2}u_{T}^{2\beta} dx,
\end{align*}
where $$0\leq a(x):=\frac{|g(x,u)|}{1+|u(x)|}\leq C(1+|u|^{q-1})\in L^{\frac{N}{2s}}(\Omega)$$ and $$D_{T}=\{(x, y)\in \C: |U(x, y)|<T\}.$$

Since 
$$
\iint_{\C} y^{1-2s} |\nabla (UU^{\beta}_{T})|^{2}dx dy=\iint_{\C} y^{1-2s} U_{T}^{2\beta}|\nabla U|^{2}dx dy+(2\beta+\beta^{2})\iint_{D_{T}} y^{1-2s}|U|^{2\beta}|\nabla U|^{2}dx dy,
$$
we can deduce that
$$
\iint_{\C} y^{1-2s} |\nabla (UU^{\beta}_{T})|^{2}dx dy\leq C(\beta+1)\int_{\Omega} a(x)(1+|u|)^{2}u_{T}^{2\beta} dx.
$$
From Theorem \ref{Sembedding} it follows that
$$
|uu_{T}^{\beta}|_{2^{*}_{s}}^{2}\leq C(\beta+1)\int_{\Omega} a(x)(1+|u|)^{2}u_{T}^{2\beta} dx.
$$
At this point we can argue as in the proof of Proposition $5.1$ in \cite{BCDPS} to deduce that $u\in L^{r}(\Omega)$ for any $r\in [2, \infty)$. As a consequence, $g(\cdot, u)\in L^{r}(\Omega)$ for some $r>\frac{N}{2s}$ and applying Theorem $4.7$ in \cite{BCDPS} we deduce that $u\in L^{\infty}(\Omega)$ and $U\in L^{\infty}(\C)$. Therefore $g(\cdot, u)\in L^{\infty}(\Omega)$ and using Corollary $4.8$ in \cite{BCDPS} we get  $U\in C^{0, \alpha}(\bar{\C})$ for some $\alpha\in (0, 1)$.
\end{proof}

\noindent
Now, following the proof of Lemma $2.7$ in \cite{CDDS}, we can establish the following maximum principle:
\begin{lem}\label{MPcdds}
Let $\Omega\subset \R^{N}$ denote any domain and take $R>0$. Let $U$ denote any locally integrable function on $\Omega\times (0, R)$ such that $\nabla U\in L^{2}(\Omega\times (0, R), y^{1-2s})$ and $c\geq 0$.
Assume that $-\dive(y^{1-2s} \nabla U)=0$ in $\Omega\times (0, R)$, $U\geq 0$ in $\Omega\times (0, R)$ and 
$$
\int_{\Omega\times (0, R)} y^{1-2s} \nabla U\nabla \Psi dxdy+\int_{\Omega} c u\psi dx\geq 0
$$
for all $\Psi\in H^{1}(\Omega\times (0, R), y^{1-2s})$ such that $\Psi\geq 0$ a.e. in $\Omega\times (0, R)$ and $\Psi=0$ on $\partial \Omega\times (0, R)\cup \Omega\times \{R\}$. Then, either $U\equiv 0$, or for any compact subset $K$ of $\Omega\times [0, R)$, $ess\inf_{K} U>0$. 
\end{lem}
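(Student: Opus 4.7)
The plan is to reduce the strong minimum principle for $U\ge 0$ to a purely interior statement on the doubled cylinder $\Omega\times(-R,R)$ by even reflection across $\{y=0\}$, and then invoke the weak Harnack inequality of Fabes--Kenig--Serapioni for degenerate elliptic equations with the $A_{2}$-Muckenhoupt weight $|y|^{1-2s}$ (note that $1-2s\in(-1,1)$, so the weight is indeed $A_{2}$). The boundary term $\int cu\psi\,dx$ will, after reflection, become a non-negative Radon measure concentrated on $\{y=0\}$, which preserves the supersolution inequality.

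First I would treat the interior. For any point $(x_{0},y_{0})\in\Omega\times(0,R)$ and any ball $B\Subset\Omega\times(0,R)$ around it, the function $U\ge 0$ solves $-\dive(y^{1-2s}\nabla U)=0$ in $B$ with a uniformly elliptic $A_{2}$-weight; the Harnack inequality of Fabes--Kenig--Serapioni then gives the dichotomy $U\equiv 0$ on $B$ or $\mbox{ess inf}_{B}U>0$. A chain-of-balls argument propagates this dichotomy to any connected compact subset of $\Omega\times(0,R)$. To reach the boundary $\{y=0\}$, set $\tilde U(x,y)=U(x,|y|)$ on $\Omega\times(-R,R)$; for an arbitrary admissible test function $\Psi\ge 0$ on this larger cylinder, writing $\Psi=\Psi_{+}+\Psi_{-}$ according to the sign of $y$, applying the hypothesis to both $\Psi_{+}(x,y)$ and $\Psi_{-}(x,-y)$, and using evenness of $\tilde U$ in $y$, one obtains
\[
\iint_{\Omega\times(-R,R)}|y|^{1-2s}\nabla\tilde U\cdot\nabla\Psi\,dx\,dy+2\int_{\Omega}cu\,\Psi(x,0)\,dx\ge 0.
\]
Thus $\tilde U\ge 0$ is a weighted distributional supersolution on $\Omega\times(-R,R)$ of an equation whose only lower-order term is the non-negative measure $2cu\,\mathcal H^{N}\!\!\restriction\{y=0\}$. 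Applying the weak Harnack inequality (in the supersolution form valid with non-negative Radon-measure zero-order terms, as in Fabes--Kenig--Serapioni and its adaptation in \cite{CDDS}) to balls $B_{r}(x_{0},0)\Subset\Omega\times(-R,R)$ yields $\mbox{ess inf}_{B_{r/2}(x_{0},0)}\tilde U>0$ unless $\tilde U\equiv 0$ on $B_{r}(x_{0},0)$.

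Finally, a connectedness argument combines both Harnack estimates to cover any compact $K\subset\Omega\times[0,R)$ by finitely many overlapping balls of the two types. Either the positivity propagates along the chain, giving $\mbox{ess inf}_{K}U>0$, or the vanishing propagates and we conclude $U\equiv 0$ on $\Omega\times[0,R)$, which by the equation and unique continuation forces $U\equiv 0$. The main obstacle is verifying that the Harnack inequality for degenerate elliptic equations with $A_{2}$-weight still applies to supersolutions carrying a measure-valued zero-order term supported on the hyperplane $\{y=0\}$, together with checking that the even reflection indeed produces the stated distributional inequality with the correct factor of $2$; both points are precisely the content of Lemma $2.7$ of \cite{CDDS}, which we adapt verbatim to our setting.
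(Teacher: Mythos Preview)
Your approach is essentially the same as the paper's: perform an even reflection of $U$ across $\{y=0\}$ to obtain a non-negative weak supersolution on the doubled cylinder $\Omega\times(-R,R)$, and then invoke a Harnack/strong-minimum principle for the $A_{2}$-weighted operator $-\dive(|y|^{1-2s}\nabla\cdot)$. The paper carries this out in two lines by citing Proposition~2 of Fall--Felli \cite{FF} directly after the reflection, whereas you unpack the argument via Fabes--Kenig--Serapioni and an explicit chain-of-balls; note also that since $c\ge 0$ and $u\ge 0$, the boundary term $2\int_{\Omega}cu\,\Psi(x,0)\,dx$ is non-negative and can simply be dropped, so you do not actually need a Harnack inequality that accommodates a measure-valued zero-order term.
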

\begin{proof}
Let $\tilde{U}$ denote the even extension of $U$ with respect to the $y$ variable in $\Omega\times (-R, R)$. Then
$$
\int_{\Omega\times (-R, R)} y^{1-2s} \nabla \tilde{U}\nabla \Psi dxdy+\int_{\Omega} c \tilde{u}\psi dx\geq 0
$$
for any $\Psi\in H^{1}(\Omega\times (-R, R), y^{1-2s})$ such that $\Psi\geq 0$ a.e. in $\Omega\times (-R, R)$ and $\Psi=0$ on $\partial (\Omega\times (-R, R))$. Using Proposition $2$ in \cite{FF}, either $\tilde{U}\equiv 0$, or $ess\inf_{K} U>0$ for any compact subset $K$ of $\Omega\times (-R, R)$.
\end{proof}

\noindent
We conclude this Section recalling some useful abstract results whose proofs can be found in \cite{DFS}:
\begin{thm}\cite{DFS}\label{thm3.1}
Let $X$ be a real Hilbert space and $\Phi\in C^{1}(X, \R)$ be such that $\Phi'$ is of type $S^{+}$, that is if 
$u_{n}\rightharpoonup u$ in  $X$ and $\limsup_{n\rightarrow \infty} \langle \Phi'(u_{n}), u_{n}-u\rangle\leq 0$ then $u_{n}\rightarrow u$ in $X$.
Suppose that $\Phi$ is bounded below in a ball $\bar{B}$. Then there exists $v_{0}\in \bar{B}$ such that $\Phi(v_{0})=\inf_{u\in \bar{B}} \Phi(u)$ and $\Phi'(v_{0})=\lambda v_{0}$ for some $\lambda\leq 0$.
\end{thm}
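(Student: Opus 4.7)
The plan is to combine Ekeland's variational principle with the $(S^{+})$ hypothesis so as to first produce a minimizer $v_{0}$ of $\Phi$ on $\bar{B}$ and then extract the Lagrange multiplier from the variational inequality satisfied at $v_{0}$. Writing $\bar{B}=\{u\in X:\|u\|\leq R\}$ and $c:=\inf_{\bar{B}}\Phi$, I would first apply Ekeland's principle to the continuous (hence lower semicontinuous) functional $\Phi$ on the closed set $\bar{B}$ to produce a sequence $(u_{n})\subset\bar{B}$ satisfying $\Phi(u_{n})\to c$ together with the perturbed minimality
\[
\Phi(v)\geq\Phi(u_{n})-\tfrac{1}{n}\|v-u_{n}\|\qquad\forall\,v\in\bar{B}.
\]
Since $\bar{B}$ is convex, one may insert $v=u_{n}+t(w-u_{n})$ for any $w\in\bar{B}$ and $t\in(0,1]$; dividing by $t$, letting $t\to 0^{+}$, and using $\Phi\in C^{1}(X,\R)$ yields the family of variational inequalities
\[
\langle\Phi'(u_{n}),w-u_{n}\rangle\geq-\tfrac{1}{n}\|w-u_{n}\|\qquad\forall\,w\in\bar{B}.
\]

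The second step exploits the Hilbert structure and the $(S^{+})$ condition. By reflexivity, a subsequence (still denoted $(u_{n})$) satisfies $u_{n}\rightharpoonup v_{0}$, and $v_{0}\in\bar{B}$ since $\bar{B}$ is closed and convex, hence weakly closed. Choosing $w=v_{0}$ in the above inequality gives $\langle\Phi'(u_{n}),u_{n}-v_{0}\rangle\leq\tfrac{1}{n}\|v_{0}-u_{n}\|$, and since $(u_{n})$ is bounded this forces $\limsup_{n}\langle\Phi'(u_{n}),u_{n}-v_{0}\rangle\leq 0$. The $(S^{+})$ property then upgrades the weak convergence to strong convergence $u_{n}\to v_{0}$ in $X$; continuity of $\Phi$ gives $\Phi(v_{0})=c$, and continuity of $\Phi'$ permits passage to the limit in the variational inequalities, producing
\[
\langle\Phi'(v_{0}),w-v_{0}\rangle\geq 0\qquad\forall\,w\in\bar{B}.
\]

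Finally, I would convert this variational inequality into the multiplier identity by distinguishing two cases. If $\|v_{0}\|<R$, then $v_{0}\pm\varepsilon z\in\bar{B}$ for every $z\in X$ and every $\varepsilon>0$ small enough, so $\langle\Phi'(v_{0}),z\rangle=0$ for all $z\in X$; in other words $\Phi'(v_{0})=0$ and one takes $\lambda=0$. If $\|v_{0}\|=R$, the tangent cone to the ball $\bar{B}$ at the boundary point $v_{0}$ is precisely the half-space $\{\zeta\in X:\langle\zeta,v_{0}\rangle\leq 0\}$, so the previous inequality forces $-\Phi'(v_{0})$ to lie in the corresponding normal cone $\{\mu v_{0}:\mu\geq 0\}$, yielding $\Phi'(v_{0})=\lambda v_{0}$ for some $\lambda\leq 0$. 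The main obstacle is the strong convergence step: without it, neither the identity $\Phi(v_{0})=c$ nor the limit form of the variational inequality can be established, and this is exactly what the $(S^{+})$ hypothesis is designed to provide, replacing the weak lower semicontinuity argument that would otherwise require convexity of $\Phi$.
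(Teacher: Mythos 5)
Your argument is correct and complete: Ekeland's variational principle on the closed convex set $\bar{B}$ gives a minimizing sequence satisfying the approximate variational inequality, the $(S^{+})$ hypothesis upgrades the weak limit to strong convergence (this is exactly the point where weak lower semicontinuity of $\Phi$ is \emph{not} available and must be replaced), and identifying $-\Phi'(v_0)$ as an element of the normal cone $\{\mu v_0:\mu\geq 0\}$ to $\bar{B}$ at a boundary minimizer yields the multiplier $\lambda\leq 0$. The paper itself does not reproduce a proof of this statement (it is quoted from de Figueiredo--Solimini \cite{DFS}), but your Ekeland-based route is the standard argument for it and is consistent with that reference.
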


\begin{prop}\cite{DFS}\label{MP}
Let $X$ be a real Hilbert space and $\Phi\in C^{1}(X, \R)$ be a functional satisfying the Palais-Smale condition.
Assume that
$$
\max\{\Phi(0), \Phi(e)\}\leq \inf\{\Phi(u): \|u\|=r \},
$$
for some $e\in X$, and $0<r<\|e\|$. Then $\Phi$ has a critical point $u_{0}\neq 0$.
\end{prop}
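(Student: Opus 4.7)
My approach is the classical Mountain Pass Theorem, handled in the borderline form where the geometric separation is only $\leq$ rather than strict. I define
\[
\Gamma := \{\gamma \in C([0,1], X) : \gamma(0) = 0, \, \gamma(1) = e\}, \qquad c := \inf_{\gamma \in \Gamma} \, \max_{t \in [0,1]} \Phi(\gamma(t)),
\]
and set $\alpha := \inf\{\Phi(u) : \|u\| = r\}$. The first step is to verify the topological linking: since $\|\gamma(0)\| = 0 < r < \|e\| = \|\gamma(1)\|$, continuity and the intermediate value theorem force every $\gamma \in \Gamma$ to cross the sphere $S_r := \{u \in X : \|u\| = r\}$, so $\max_t \Phi(\gamma(t)) \geq \alpha$, and hence $c \geq \alpha \geq \max\{\Phi(0), \Phi(e)\}$.

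Then I would split into two cases. If $c > \max\{\Phi(0), \Phi(e)\}$, the standard quantitative deformation lemma applies: using the Palais-Smale condition at level $c$, one deforms a near-optimal path below level $c$ while keeping the endpoints $0$ and $e$ fixed, which contradicts the definition of $c$ unless $c$ is itself a critical value of $\Phi$. Any critical point $u_0$ produced this way satisfies $\Phi(u_0) = c > \Phi(0)$, whence $u_0 \neq 0$.

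The main obstacle is the degenerate case $c = \max\{\Phi(0), \Phi(e)\} = \alpha$, which is allowed by the weak hypothesis and in which the standard argument only yields a critical point possibly coinciding with $0$ or $e$. I would handle it by a refinement in the spirit of Ghoussoub-Preiss: choose $\gamma_n \in \Gamma$ with $\max_t \Phi(\gamma_n(t)) \leq c + 1/n$, pick crossing times $t_n$ with $u_n := \gamma_n(t_n) \in S_r$, so that $\Phi(u_n) \to c$, and show that $(u_n)$ can be arranged to satisfy $\Phi'(u_n) \to 0$. The key trick is to perform the deformation only inside a thin annulus around $S_r$, away from $0$ and $e$, so that the modified paths remain admissible; if no such Palais-Smale sequence on $S_r$ existed, this localized deformation would push the maximum of the modified paths strictly below $c$, contradicting minimality. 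Applying the Palais-Smale condition to $(u_n)$ then extracts a subsequence converging to some $u_0$ with $\|u_0\| = r > 0$ and $\Phi'(u_0) = 0$, the desired nontrivial critical point.
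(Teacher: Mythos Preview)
The paper does not give its own proof of this proposition: it is stated in Section~2 as one of the abstract results ``whose proofs can be found in \cite{DFS}'' and is simply quoted with that citation. So there is no in-paper argument to compare against; your task reduces to whether your plan is a valid proof.

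Your plan is sound. The linking step and the non-degenerate case $c>\max\{\Phi(0),\Phi(e)\}$ are standard and correct. For the borderline case $c=\alpha$, your observation that any crossing point $u_n=\gamma_n(t_n)\in S_r$ of a near-optimal path automatically satisfies $\alpha\le\Phi(u_n)\le c+1/n$, hence $\Phi(u_n)\to c$, is right, and invoking a Ghoussoub--Preiss-type localization to force $\Phi'(u_n)\to 0$ with $\mathrm{dist}(u_n,S_r)\to 0$ is exactly the appropriate tool; the (PS) condition then yields a critical point on $S_r$, which is therefore nonzero. One small caution: the sentence ``perform the deformation only inside a thin annulus around $S_r$'' is a slight oversimplification of the actual Ghoussoub--Preiss mechanism, which combines Ekeland's principle on the path space with a careful choice of the almost-maximizing point; if you write this out in full you should follow that scheme rather than a naive localized flow, since the maximum of a near-optimal path need not lie near $S_r$ even when $c=\alpha$.

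As a historical remark, the cited source \cite{DFS} is from 1984 and thus predates Ghoussoub--Preiss (1989), so de~Figueiredo and Solimini necessarily argue differently---via a direct deformation/Ekeland argument tailored to this situation. Your route through the Ghoussoub--Preiss refinement is a legitimate (and arguably cleaner) alternative that yields the same conclusion with the additional information $\|u_0\|=r$ in the degenerate case.
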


\begin{prop}\cite{DFS}\label{Lm}
Let $\Phi: X\rightarrow \R$ be a $C^{1}$ functional defined in a Hilbert space $X$. Let $K$ be a closed convex subset of $X$. Suppose that 
\begin{compactenum}[$(i)$]
\item $I-\Phi'$ maps $K$ into $K$;
\item $\Phi$ is bounded below in $K$;
\item $\Phi$ satisfies the Palais-Smale condition in $K$.
\end{compactenum}
Then there exists $u_{0}\in X$ such that $\Phi'(u_{0})=0$ and $\inf_{u\in K} \Phi(u)=\Phi(u_{0})$.
\end{prop}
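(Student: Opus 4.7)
The plan is to combine Ekeland's variational principle with condition (i) in order to produce a Palais--Smale sequence at the infimum level, and then invoke (iii) to extract a convergent subsequence.

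First I would set $c := \inf_{u \in K} \Phi(u)$, which is finite by hypothesis (ii). Since $K$ is a closed subset of the Hilbert space $X$, it is a complete metric space with the induced distance. Applying Ekeland's variational principle to $\Phi|_K$, for every $n \in \N$ I obtain a point $u_{n} \in K$ such that
\begin{equation*}
\Phi(u_{n}) \leq c + \tfrac{1}{n}
\qquad \text{and} \qquad
\Phi(v) \geq \Phi(u_{n}) - \tfrac{1}{n}\|v - u_{n}\| \quad \text{for all } v \in K.
\end{equation*}

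Next I would exploit condition (i) to upgrade this to an almost-critical-point estimate. The crucial observation is that, for each fixed $u \in K$, both $u$ and $(I-\Phi')(u)$ lie in $K$; since $K$ is convex, the segment joining them is contained in $K$, i.e.
\begin{equation*}
u - t\,\Phi'(u) = (1-t)\,u + t\,(I-\Phi')(u) \in K \qquad \text{for every } t \in [0,1].
\end{equation*}
Choosing $v = u_{n} - t\,\Phi'(u_{n})$ with $t \in (0,1]$ in the Ekeland inequality, dividing by $t$, and letting $t \to 0^{+}$, the smoothness of $\Phi$ yields
\begin{equation*}
-\,\|\Phi'(u_{n})\|^{2} \geq -\tfrac{1}{n}\,\|\Phi'(u_{n})\|,
\end{equation*}
so that $\|\Phi'(u_{n})\| \leq 1/n$. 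Combined with $\Phi(u_{n}) \to c$, this shows $\{u_{n}\}$ is a Palais--Smale sequence for $\Phi$ in $K$.

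Finally, condition (iii) produces a subsequence $u_{n_{k}} \to u_{0}$; since $K$ is closed, $u_{0} \in K$. The continuity of $\Phi$ and $\Phi'$ then gives $\Phi(u_{0}) = c = \inf_{K} \Phi$ and $\Phi'(u_{0}) = 0$, concluding the argument.

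The main obstacle, and the only nontrivial step, is passing from Ekeland's abstract inequality to a genuine gradient estimate: for an unconstrained minimization this is standard, but here one needs admissible test directions that remain inside $K$. That is exactly what condition (i) together with the convexity of $K$ provides, by guaranteeing that the ray $t \mapsto u_{n} - t\Phi'(u_{n})$ stays in $K$ for $t$ small. Once this convexity trick is in place the rest is routine.
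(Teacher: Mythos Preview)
Your argument is correct and is precisely the standard proof of this result: the paper does not supply its own proof here but cites \cite{DFS}, and the Ekeland-plus-convexity argument you give is exactly the one found in de Figueiredo--Solimini (and in de Figueiredo's lecture notes \cite{DF1} on the Ekeland principle). The key step --- using $(i)$ together with the convexity of $K$ to ensure $u_n - t\Phi'(u_n)\in K$ for $t\in[0,1]$, and hence to pass from the Ekeland inequality to $\|\Phi'(u_n)\|\le 1/n$ --- is identified and executed correctly.
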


\section{Sub and super solutions to \eqref{P0}}

\noindent
We begin giving the following lemma which gives some useful estimates on the nonlinearity $f$.
\begin{lem}\label{lem1}
Assume that $f$ satisfies \eqref{f1}. Then there exist $0<\underline{\mu}<\lambda_{1}^{s}<\overline{\mu}$ and $\kappa>0$ such that
\begin{equation}\label{ggrowth}
f(x, t)> \underline{\mu} t-\kappa \mbox{ and } f(x, t)> \overline{\mu} t-\kappa
\end{equation}
for any $x\in \Omega$ and $t\in \R$.
\end{lem}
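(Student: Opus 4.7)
The plan is to split the real line into three regions—$t$ very negative, $t$ very positive, and a bounded middle strip—and verify each of the two inequalities on each region, then patch the resulting constants.

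First I would fix the slopes. By \eqref{f1} (read as uniform in $x\in\bar\Omega$, which is legitimate since $f$ is locally Lipschitz in $t$ and continuous in $x$ on the compact set $\bar\Omega$), set
\[
\mu^{-}:=\sup_{x\in\bar\Omega}\limsup_{t\to-\infty}\frac{f(x,t)}{t}<\lambda_{1}^{s}<\inf_{x\in\bar\Omega}\liminf_{t\to+\infty}\frac{f(x,t)}{t}=:\mu^{+}.
\]
Then I would pick $\underline{\mu}\in(\max\{0,\mu^{-}\},\lambda_{1}^{s})$ and $\overline{\mu}\in(\lambda_{1}^{s},\mu^{+})$, so that $0<\underline\mu<\lambda_1^s<\overline\mu$ as required.

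Next I would establish the first inequality $f(x,t)>\underline\mu\,t-\kappa$. By the choice of $\underline\mu$ there exists $R_{1}>0$ such that $f(x,t)/t<\underline{\mu}$ for all $x\in\bar\Omega$ and all $t\leq -R_{1}$; multiplying by $t<0$ flips the inequality and gives $f(x,t)>\underline{\mu}\,t$ on that region. Likewise, since $\underline{\mu}<\lambda_{1}^{s}<\mu^{+}$, there exists $R_{2}>0$ with $f(x,t)/t>\underline{\mu}$ for $t\geq R_{2}$, so $f(x,t)>\underline{\mu}\,t$ on that region as well. On the compact slab $\bar\Omega\times[-R_{1},R_{2}]$ continuity of $f$ yields a constant $M_{1}>0$ with $|f(x,t)|\leq M_{1}$ and $|\underline\mu\, t|\leq M_{1}$, giving $f(x,t)-\underline\mu\,t\geq -\kappa_{1}$ for some $\kappa_{1}>0$. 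Patching the three regions produces the desired linear lower bound with constant $\kappa_{1}$.

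The inequality $f(x,t)>\overline\mu\,t-\kappa$ is handled symmetrically: for $t\geq R_{4}$ one has $f(x,t)/t>\overline{\mu}$ hence $f(x,t)>\overline{\mu}\,t$; for $t\leq -R_{3}$ one has $f(x,t)/t<\lambda_{1}^{s}<\overline{\mu}$, and multiplication by $t<0$ again flips to $f(x,t)>\overline{\mu}\,t$; on the intermediate compact strip one absorbs the remainder into a constant $\kappa_{2}$. Taking $\kappa:=\max\{\kappa_{1},\kappa_{2}\}+1$ yields both inequalities simultaneously.

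There is no substantive obstacle here: the argument is essentially a careful packaging of the two-sided hypothesis \eqref{f1}. The only point that requires a moment of care is the sign flip when $t<0$, which is why the $\limsup$ at $-\infty$ being \emph{below} $\lambda_{1}^{s}$ produces a lower bound on $f(x,t)$ rather than an upper bound. I would explicitly note this sign issue so that the reader sees why the single hypothesis \eqref{f1} delivers both the $\underline\mu$-bound and the $\overline\mu$-bound on the whole real line.
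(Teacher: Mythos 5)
Your argument is correct and follows essentially the same path as the paper's proof: both extract linear lower bounds near $\pm\infty$ directly from the hypothesis \eqref{f1} and control the remaining compact strip by boundedness of $f$. The only difference is cosmetic — the paper establishes $f(x,t)>\underline{\mu}\,t-\kappa_{1}$ only for $t\leq 0$ and $f(x,t)>\overline{\mu}\,t-\kappa_{2}$ only for $t\geq 0$, then observes that the remaining two cases come for free from $\underline{\mu}<\overline{\mu}$ (since $\underline{\mu}\,t\leq\overline{\mu}\,t$ for $t\geq 0$ and $\overline{\mu}\,t\leq\underline{\mu}\,t$ for $t\leq 0$), whereas you verify each of the two inequalities on the whole real line directly by splitting into three regions.
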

\begin{proof}
From the first inequality in \eqref{f1} we can find $0<\underline{\mu}<\lambda_{1}^{s}$ and $\kappa_{1}>0$ such that $f(x, t)>\underline{\mu} t-\kappa_{1}$ for any $x\in \Omega$ and $t\leq 0$. The second inequality in \eqref{f1} implies that there exist $\overline{\mu}>\lambda_{1}^{s}$ and $\kappa_{2}>0$ such that $f(x, t)>\overline{\mu} t-\kappa_{2}$ for all $x\in \Omega$ and $t\geq 0$. Setting $\kappa=\max\{\kappa_{1}, \kappa_{2}\}$, we can see that \eqref{ggrowth} holds. 

\end{proof}

\begin{remark}\label{REM3.1}
From \eqref{ggrowth} we can deduce that there exists $\tau\in \R$ (independent of $h$) such that for any $t>\tau$ the problem \eqref{P} has no solutions.
Indeed, if $U$ is a solution to \eqref{P}, and we take $\textup{ext}(\varphi_{1})(x, y)=\varphi_{1}(x) \theta(\sqrt{\lambda_{1}}y)$  as test function in the weak formulation of \eqref{P}, we can see that
$$
\lambda_{1}^{s} \int_{\Omega} u \varphi_{1} \, dx=\iint_{\C} y^{1-2s} \nabla U \nabla \textup{ext}(\varphi_{1})\, dx dy=\int_{\Omega} f(x, u)\varphi_{1}\, dx+t,
$$
where we have used $|\varphi_{1}|_{2}=1$ and $\int_{\Omega} h \varphi_{1}\, dx=0$.
Then, if $\int_{\Omega} u\varphi_{1}\, dx>0$, using \eqref{ggrowth} we get
$$
(\lambda^{s}_{1}-\overline{\mu})\int_{\Omega} u \varphi_{1} \, dx+\kappa \int_{\Omega} \varphi_{1}\, dx\geq  t\rightarrow \infty \mbox{ as } t\rightarrow \infty.
$$
A similar estimate can be obtained when $\int_{\Omega} u\varphi_{1}\, dx\leq 0$.
\end{remark}

\noindent
Now, we prove a suitable variant of the weak maximum principle.
\begin{lem}\label{lem2}
Let $\alpha\in L^{\infty}(\Omega)$ such that $ess\sup_{x\in \Omega} \alpha(x)<\lambda_{1}^{s}$, and $U\in \X$ be a supersolution to 
\begin{equation*}
\left\{
\begin{array}{ll}
-\dive(y^{1-2s} \nabla U)= 0 &\mbox{ in } \C\\
U=0 &\mbox{ on } \partial_{L} \C\\
\frac{\partial U}{\partial \nu^{1-2s}}=\alpha(x)u &\mbox{ on } \partial^{0} \C,
\end{array}
\right.
\end{equation*}
that is 
\begin{align}\label{wSs}
\iint_{\C} y^{1-2s}\nabla U \nabla \Psi \, dx dy\geq \int_{\Omega}  \alpha(x) u \psi \, dx
\end{align}
for any $\Psi\in \X$ such that $\Psi \geq 0$.
Then $U\geq 0$ in $\C$. 
\end{lem}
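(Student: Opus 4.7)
The plan is the standard weak maximum principle argument adapted to the degenerate extended setting: test the variational inequality \eqref{wSs} against $\Psi = U^{-} := \max\{-U, 0\}$ and exploit the spectral gap $\mathrm{ess\,sup}\,\alpha < \lambda_{1}^{s}$ through a Poincar\'e-type inequality.

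First I would check that $\Psi = U^{-}$ is an admissible test function, i.e.\ $U^{-}\in\X$ with $\Psi \ge 0$. This follows from the fact that truncation by a Lipschitz function preserves membership in the weighted Sobolev space $H^{1}(\mathcal{C}, y^{1-2s})$, and since $U = 0$ on $\partial_{L}\mathcal{C}$ in the trace sense, so is $U^{-}$; moreover $\nabla U^{-} = -\chi_{\{U<0\}}\nabla U$, giving in particular that $\nabla U \cdot \nabla U^{-} = -|\nabla U^{-}|^{2}$ a.e. Plugging $\Psi = U^{-}$ into \eqref{wSs}, and using that on $\partial^{0}\mathcal{C}$ one has $u\,u^{-} = -(u^{-})^{2}$, the inequality becomes
\begin{equation*}
-\|U^{-}\|^{2} \;=\; \iint_{\mathcal{C}} y^{1-2s}\nabla U\cdot\nabla U^{-}\,dx\,dy \;\geq\; -\int_{\Omega}\alpha(x)(u^{-})^{2}\,dx,
\end{equation*}
i.e.\ $\|U^{-}\|^{2} \leq \int_{\Omega}\alpha(x)(u^{-})^{2}\,dx \leq \bar{\alpha}\,|u^{-}|_{2}^{2}$, where $\bar{\alpha} := \mathrm{ess\,sup}_{\Omega}\alpha < \lambda_{1}^{s}$.

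Next I would invoke the variational characterization of the first Dirichlet eigenvalue of $(-\Delta_{\D})^{s}$, namely $\lambda_{1}^{s} = \inf\{|v|_{\A}^{2}/|v|_{2}^{2} : v\in\A\setminus\{0\}\}$. Combining this with the trace theorem (Theorem \ref{tracethm}) and the normalization $\kappa_{s}=1$, every $V\in\X$ satisfies the Poincar\'e-type bound $\|V\|^{2}\geq|\mathrm{Tr}(V)|_{\A}^{2}\geq\lambda_{1}^{s}|\mathrm{Tr}(V)|_{2}^{2}$. Applied to $V = U^{-}$, this yields $\lambda_{1}^{s}|u^{-}|_{2}^{2}\leq\|U^{-}\|^{2}\leq\bar{\alpha}|u^{-}|_{2}^{2}$, and the strict inequality $\bar{\alpha}<\lambda_{1}^{s}$ forces $u^{-}\equiv 0$ a.e.\ in $\Omega$. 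Inserting this back into the chain of inequalities gives $\|U^{-}\|=0$, so $U^{-}=0$ in $\X$ and hence $U\geq 0$ a.e.\ in $\mathcal{C}$.

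The mildly delicate point, and the one I would be most careful about, is the admissibility and computation for the truncation $U^{-}$: verifying that $U^{-}$ belongs to $\X$ (so that the lateral boundary condition and the chain rule $\nabla U^{-} = -\chi_{\{U<0\}}\nabla U$ hold in the appropriate weighted-Sobolev sense), and that the boundary term arising from testing makes sense through the trace of $U^{-}$ coinciding with $u^{-}$. Once these ingredients are in place, the argument is essentially a linear one pivoting on the spectral gap condition on $\alpha$. Note that one does not need any sign assumption beyond $\Psi\geq 0$ in the class of admissible test functions, which is why the inequality version of the equation suffices.
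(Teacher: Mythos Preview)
Your argument is correct and is essentially the paper's own proof: test \eqref{wSs} with $\Psi=U^{-}$, use $\nabla U\cdot\nabla U^{-}=-|\nabla U^{-}|^{2}$ and $u\,u^{-}=-(u^{-})^{2}$, and close via the Poincar\'e-type inequality $\lambda_{1}^{s}|u|_{2}^{2}\le\|U\|^{2}$ together with $\bar\alpha<\lambda_{1}^{s}$. Your write-up is in fact slightly more careful than the paper's, since you explicitly address the admissibility of $U^{-}$ in $\X$ and avoid the formal ``strict inequality'' shortcut by first deducing $u^{-}=0$ and then $\|U^{-}\|=0$.
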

\begin{proof}
Take $\Psi=U^{-}$ in \eqref{wSs}, where $U^{-}=-\min\{U, 0\}\geq 0$. Then, using the following fractional Poincar\'e inequality 
\begin{equation}\label{Poinineq}
\lambda_{1}^{s} |u|_{2}^{2}\leq \|U\|^{2} \mbox{ for any } U\in \X,
\end{equation}
(where we used Theorem \ref{tracethm} and $|u|_{\h}^{2}=\sum_{k\geq 1}\lambda_{k}^{s}|c_{k}|^{2}\geq \lambda_{1}^{s}\sum_{k\geq 1} |c_{k}|^{2}=\lambda_{1}^{s}|u|_{2}^{2}$, since $\lambda_{k}\geq \lambda_{1}$ for all $k\geq 1$)
we get
\begin{align*}
&-\lambda_{1}^{s}\int_{\Omega} (u^{-})^{2} \, dx\geq -\iint_{\C}y^{1-2s} |\nabla U^{-}|^{2}  \, dx dy \\
&\geq-\int_{\Omega} \alpha(x) (u^{-})^{2} \, dx> -\lambda_{1}^{s} \int_{\Omega} (u^{-})^{2} \, dx
\end{align*}
which implies that $U^{-}=0$, that is $U=U^{+}\geq 0$ in $\C$. 
\end{proof}
\begin{remark}
Under the same assumptions on $\alpha(x)$, if $U\in \X$ satisfies 
\begin{align}\label{wss}
\iint_{\C} y^{1-2s}\nabla U \nabla \Psi \, dx dy\leq \int_{\Omega}  \alpha(x) u \psi \, dx
\end{align}
for any $\Psi\in \X$ such that $\Psi\geq 0$, then $U\leq 0$. In fact, it is enough to take $\Psi=U^{+}$ as test function in \eqref{wss}.
\end{remark}

\noindent
At this point we can show that for any $t\in \R$, it is possible to find a subsolution to \eqref{P}.
\begin{lem}\label{lem3}
Assume that \eqref{f1} holds. Then, for any $t\in \R$, the problem \eqref{P} has a subsolution $V_{t}$ which bounds from below  every supersolution to \eqref{P}. Moreover, $V_{t}\in C^{0, \beta}(\bar{\C})$.
\end{lem}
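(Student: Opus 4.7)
The plan is to construct $V_t$ as the unique solution of a linear auxiliary extension problem whose boundary datum replaces the nonlinear term $f(x,v)$ by the affine lower bound furnished by Lemma \ref{lem1}. Concretely, Lemma \ref{lem1} gives $\underline{\mu}<\lambda_1^s$ and $\kappa>0$ with $f(x,\tau)>\underline{\mu}\tau-\kappa$ for every $(x,\tau)$, so I will look for $V_t\in\X$ satisfying
\begin{equation*}
\iint_{\mathcal{C}} y^{1-2s}\nabla V_t \nabla \Psi \, dx dy = \int_{\Omega}[\underline{\mu}\, v_t - \kappa + t\varphi_1 + h]\,\psi \, dx \qquad \forall\,\Psi\in\X,
\end{equation*}
which at the trace level corresponds to the linear Dirichlet problem $(-\Delta_{\D})^s v_t - \underline{\mu}\, v_t = -\kappa + t\varphi_1 + h$ in $\Omega$, $v_t=0$ on $\partial\Omega$.

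Existence of $V_t$ will follow from the Lax--Milgram theorem on $\X$. The bilinear form $a(U,\Psi):=\iint_{\mathcal{C}} y^{1-2s}\nabla U\nabla \Psi\,dxdy-\underline{\mu}\int_{\Omega} u\psi\,dx$ is continuous, and coercivity is a direct consequence of the Poincar\'e inequality \eqref{Poinineq}, yielding $a(U,U)\geq (1-\underline{\mu}/\lambda_1^s)\|U\|^2$; the linear functional $\Psi\mapsto\int_{\Omega}(-\kappa+t\varphi_1+h)\psi\,dx$ is continuous on $\X$ thanks to Theorem \ref{tracethm} combined with Theorem \ref{Sembedding}. Once $V_t$ is obtained, the subsolution property is immediate: for $\Psi\in\X$ with $\Psi\geq 0$, the pointwise inequality $\underline{\mu}\,v_t-\kappa<f(x,v_t)$ tested against $\psi\geq 0$ gives
\begin{equation*}
\iint_{\mathcal{C}} y^{1-2s}\nabla V_t \nabla \Psi \, dx dy \leq \int_{\Omega}[f(x,v_t)+t\varphi_1+h]\,\psi \, dx.
\end{equation*}

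For the comparison statement, let $U\in\X$ be any supersolution of \eqref{P} and set $W:=V_t-U$ with trace $w=v_t-u$. Using the supersolution inequality for $U$, bounding $f(x,u)$ below by $\underline{\mu}\,u-\kappa$ on $\{\psi\geq 0\}$, and subtracting the weak identity satisfied by $V_t$, one should arrive at
\begin{equation*}
\iint_{\mathcal{C}} y^{1-2s}\nabla W \nabla \Psi \, dx dy \leq \underline{\mu}\int_{\Omega} w\,\psi \, dx \qquad \forall\,\Psi\in\X,\ \Psi\geq 0.
\end{equation*}
Since $\underline{\mu}<\lambda_1^s$, the Remark following Lemma \ref{lem2} (the ``dual'' weak maximum principle) then applies with $\alpha\equiv\underline{\mu}$ and forces $W\leq 0$, i.e.\ $V_t\leq U$. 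I do not expect any deep obstacle here: the delicate points are simply the correct bookkeeping of the inequality directions when combining the three ingredients (the supersolution definition for $U$, the affine lower bound on $f$, and the identity for $V_t$), and a careful use of Theorem \ref{tracethm} to justify the Lax--Milgram step. The globally affine estimate from Lemma \ref{lem1} makes truncation or iterative arguments unnecessary.
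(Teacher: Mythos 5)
Your proof is correct and follows essentially the same strategy as the paper's: construct $V_t$ as the solution of the linear problem obtained by replacing $f$ with the affine lower bound $\underline{\mu}\,\tau-\kappa$ from Lemma~\ref{lem1}, use the Poincar\'e inequality \eqref{Poinineq} (via Lax--Milgram) for existence, and invoke the weak maximum principle of Lemma~\ref{lem2} to compare against any supersolution. The only cosmetic difference is that the paper bounds $t\varphi_1+h$ below by the constant $-M_t$ and solves with constant datum $-\kappa-M_t$, whereas you keep the exact inhomogeneity $-\kappa+t\varphi_1+h$; both choices lead to the same conclusion by identical arguments.
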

\begin{proof}
Let $M_{t}=\sup_{x\in \bar{\Omega}} |t \varphi_{1}(x)+h(x)|$.  Then, it is clear that the following linear problem
\begin{equation*}
\left\{
\begin{array}{ll}
-\dive(y^{1-2s} \nabla U)=0  &\mbox{ in } \C\\
U=0 &\mbox{ on } \partial_{L} \C \\
\frac{\partial U}{\partial \nu^{1-2s}}=\underline{\mu} u-\kappa-M_{t} &\mbox{ on } \partial^{0} \C
\end{array}
\right.
\end{equation*}
with $\underline{\mu}$ and $C$ as in \eqref{ggrowth}, admits a unique solution $V_{t}\in \X$. In view of Lemma \ref{boundlem}, we can see that $V_{t}\in C^{0, \beta}(\bar{\C})$. 
In the light of \eqref{ggrowth} and the definition of $M_{t}$, we can see that $V_{t}$ is a subsolution to \eqref{P}.
Indeed, for any $\Psi\in \X$ such that $\Psi\geq 0$, we can see that 
\begin{align*}
\iint_{\C} y^{1-2s}\nabla V_{t} \nabla \Psi \, dx dy=\int_{\Omega}  [\underline{\mu} v_{t}-\kappa-M_{t}] \psi \, dx\leq \int_{\Omega}  [f(x, v_{t})+t \varphi_{1}+h] \psi \, dx.
\end{align*}
Now, let $W_{t}$ be a  supersolution to \eqref{P} and we show that $V_{t}\leq W_{t}$. Take $\Psi\in \X$ such that $\Psi\geq 0$ in $\C$.
Then, using that $V_{t}$ and $W_{t}$ are subsolution and supersolution to \eqref{P} respectively, we can see that 
\begin{align*}
&\iint_{\C} y^{1-2s}\nabla (W_{t}-V_{t}) \nabla \Psi \, dx dy \\
&\geq \int_{\Omega}  [f(x, W_{t})+t \varphi_{1}+h-\underline{\mu} v_{t}+\kappa+M_{t}] \psi \, dx \\
&\geq \int_{\Omega}  \underline{\mu}(w_{t}-v_{t}) \psi \, dx.
\end{align*}
Since $0<\underline{\mu}<\lambda_{1}^{s}$, we can apply Lemma \ref{lem2} to deduce that $V_{t}\leq W_{t}$.

\end{proof}

\noindent
In the next lemma we show the existence of a supersolution to \eqref{P} provided that $t$ is sufficiently small.
\begin{lem}\label{lem4}
There exists $t_{0}\in \R$, $t_{0}<0$ such that for any $t\leq t_{0}$ the problem \eqref{P} admits a   supersolution $W_{t}$.
\end{lem}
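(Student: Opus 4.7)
The plan is to construct $W_t$ as a fixed positive multiple (independent of $t$) of the extension of the fractional torsion function of $(-\Delta_{\D})^{s}$, and then to verify the pointwise supersolution inequality in $\Omega$ by splitting into a thin boundary strip (where the nonlinearity is controlled by the smallness of the torsion on $\partial\Omega$) and an interior region (where $-t\varphi_{1}$ supplies the needed slack once $-t$ is large enough).

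Concretely, I will let $e\in\A$ be the unique weak solution of $(-\Delta_{\D})^{s} e = 1$ in $\Omega$ with zero Dirichlet condition; the Lax--Milgram theorem (since $\lambda_{1}^{s}>0$), together with Lemma \ref{MPcdds} and Lemma \ref{boundlem}, shows that $e$ exists, is strictly positive in $\Omega$, vanishes on $\partial\Omega$, and belongs to $L^{\infty}(\Omega)\cap C^{0,\alpha}(\bar{\Omega})$. For a constant $K_{0}>0$ to be fixed below, I will set $W_{t}\in\X$ to be the $s$-harmonic extension of $K_{0}e$, so that $(-\Delta_{\D})^{s}(K_{0}e)=K_{0}$. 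Using $-\dive(y^{1-2s}\nabla W_{t})=0$ in $\C$ and integration by parts (as in the proof of Lemma \ref{lem2}), the weak supersolution condition for \eqref{P} is equivalent to the pointwise inequality
$$
K_{0} \;\geq\; f(x,K_{0}e(x))+t\varphi_{1}(x)+h(x)\quad\text{a.e. in }\Omega.
$$

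To verify this, I will fix $\delta>0$ and split $\Omega=\Omega_{\delta}\cup\Omega^{\delta}$, where $\Omega_{\delta}=\{x\in\Omega:\varphi_{1}(x)<\delta\}$. On $\Omega_{\delta}$ one has $t\varphi_{1}\leq 0$ for $t\leq 0$, and a short compactness argument (using continuity of $\varphi_{1}$ and $e$ on the compact set $\bar{\Omega}$ together with $\varphi_{1}|_{\partial\Omega}=e|_{\partial\Omega}=0$) yields $\eta(\delta):=\sup_{\Omega_{\delta}}e\to 0$ as $\delta\to 0^{+}$. By \eqref{f3} applied to $K_{0}e\geq 0$, the displayed inequality on $\Omega_{\delta}$ is then implied by $K_{0}\geq c_{3}(K_{0}\eta(\delta))^{p-1}+c_{4}+|h|_{\infty}$, which I secure by first fixing $K_{0}:=2(c_{4}+|h|_{\infty})+1$ and then taking $\delta$ small enough that $c_{3}(K_{0}\eta(\delta))^{p-1}\leq K_{0}/2$. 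On $\Omega^{\delta}$, $-t\varphi_{1}\geq -t\delta$ and the global bound $f(x,K_{0}e)\leq c_{3}K_{0}^{p-1}|e|_{\infty}^{p-1}+c_{4}$ reduces the inequality to
$$
K_{0}+(-t)\delta \;\geq\; c_{3}K_{0}^{p-1}|e|_{\infty}^{p-1}+c_{4}+|h|_{\infty},
$$
which holds as soon as $-t$ exceeds a threshold $T_{A}(K_{0},\delta)$; I set $t_{0}:=-T_{A}<0$. The H\"older regularity $W_{t}\in C^{0,\alpha}(\bar{\C})$ then follows from Lemma \ref{boundlem}.

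The hard part will be the boundary strip $\Omega_\delta$: because $\varphi_{1}$ degenerates on $\partial\Omega$, the slack $-t\varphi_{1}$ cannot be exploited uniformly there, and one must dominate $f(x,K_{0}e)+h$ by $K_{0}$ alone. The choice $W_{t}\propto e$ is precisely what makes this feasible, since the vanishing of $e$ on $\partial\Omega$ forces $f(x,K_{0}e)$ to be uniformly small on $\Omega_{\delta}$ regardless of the size of $K_{0}$, thus taming the nonlinearity in that strip independently of the eigenfunction degeneracy.
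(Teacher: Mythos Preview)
Your proof is correct and follows a genuinely different route from the paper's.

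The paper constructs the supersolution by solving $(-\Delta_{\D})^{s} w=\psi$ for a cut-off $\psi$ supported in a thin collar of $\partial\Omega$ with $0\leq\psi\leq M$; by shrinking the support, the $L^{p}$-norm of $\psi$ is made small, and Grubb's spectral regularity estimate $|w|_{\infty}\leq C|\psi|_{p}$ forces $w\in[0,\beta]$ \emph{globally}, so that $f(x,w)+h\leq M$ everywhere. One then checks $\psi\geq t_{0}\varphi_{1}+M$ by distinguishing an interior set $\Omega'$ (where $\varphi_{1}\geq\delta_{0}$ and $t_{0}=-M/\delta_{0}$ gives the slack) from its complement (where $\psi=M$ directly).

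You instead take $w=K_{0}e$ with $e$ the fractional torsion function, so that $(-\Delta_{\D})^{s}w\equiv K_{0}$; since $E\in C^{0,\alpha}(\bar{\C})$ vanishes on $\partial_{L}\C$, the trace $e$ is continuous on $\bar{\Omega}$ with $e|_{\partial\Omega}=0$, hence $e$ is automatically small on the strip $\{\varphi_{1}<\delta\}$, and you dominate $f(x,K_{0}e)+h$ by $K_{0}$ there, handling the interior (as the paper does) via $-t\varphi_{1}\geq -t\delta$. Your compactness argument for $\eta(\delta)\to 0$ is sound.

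What each approach buys: your argument is more self-contained, avoiding the external citation to \cite{Grubb} and relying only on Lemma~\ref{boundlem} already proved in the paper. The paper's construction, on the other hand, does not touch the growth condition \eqref{f3}: because $w$ stays in a fixed compact interval $[0,\beta]$, mere continuity of $f$ suffices. You invoke \eqref{f3} on $\Omega^{\delta}$, though this is inessential---since $K_{0}e$ ranges in $[0,K_{0}|e|_{\infty}]$ and $f$ is locally Lipschitz, you could replace $c_{3}K_{0}^{p-1}|e|_{\infty}^{p-1}+c_{4}$ by $\max_{\bar{\Omega}\times[0,K_{0}|e|_{\infty}]}|f|$ and drop \eqref{f3} entirely.
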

\begin{proof}
Let $\beta>0$ and $M>0$ such that $M>\max\{|f(x, t)|+\|h\|_{\infty}: x\in \Omega, t\in [0, \beta]\}$.
Let $\Omega''\Subset \Omega'\Subset \Omega$ such that $|\Omega\setminus \Omega''|< \e$, and we define a smooth function $\psi$ such that $\psi=0$ in $\Omega''$, $0\leq \psi\leq M$ in $\Omega$, and $\psi=M$ in $\Omega \setminus \Omega'$. 
Clearly, $\psi\in L^{q}(\Omega)$ for any $q\in [1, \infty]$. 
Now, let $W$ be the unique solution to 
\begin{equation}\label{Vweak0}
\left\{
\begin{array}{ll}
-\dive(y^{1-2s} \nabla W)=0  &\mbox{ in } \C\\
W=0 &\mbox{ on } \partial_{L} \C \\
\frac{\partial W}{\partial \nu^{1-2s}}=\psi &\mbox{ on } \partial^{0} \C. 
\end{array}
\right.
\end{equation}
Choosing $-W^{-}\leq 0$ as test function in \eqref{Vweak0}, and recalling that $\psi\geq 0$ in $\Omega$, we see that 
$$
\iint_{\C}y^{1-2s} |\nabla W^{-}|^{2} \, dx dy=-\int_{\Omega} \psi w^{-} \, dx\leq 0,
$$
that is $W\geq 0$ in $\C$. Moreover, arguing as in Lemma \ref{lem3}, we  deduce that $W\in C^{0, \gamma}(\bar{\C})$, for some $\gamma>0$.
Take $p>N/2s$. We note that the trace $w$  of $W$ is a solution to
\begin{equation*}
\left\{
\begin{array}{ll}
(-\Delta_{\D})^{s} w=\psi  &\mbox{ in } \Omega\\
w=0 &\mbox{ on } \partial \Omega. 
\end{array}
\right.
\end{equation*}
Now, we show that there exist $C_{1}, C_{2}>0$ such that
$$
|w|_{\infty}\leq C_{1}|\psi|_{p}\leq C_{2} \e^{1/p}.
$$
Take $W_{k}=(|W|-k)^{+}sign(W)\in \X$, with $k\geq 0$, as test function in \eqref{Vweak0} and we see that 
\begin{align*}
\|W_{k}\|^{2}=\iint_{\C} y^{1-2s} \nabla W \nabla W_{k} dx dy=\int_{\Omega} \psi w_{k} dx\leq |\psi|_{p}|w_{k}|_{2^{*}_{s}}|\chi_{A_{k}}|_{p_{1}}
\end{align*}
where $A_{k}=\{x\in \Omega: |w(x)|\geq k\}$ and $p_{1}>1$ is such that $\frac{1}{p_{1}}=1-\frac{1}{2^{*}_{s}}-\frac{1}{p}$. Let us note that $p_{1}<2^{*}_{s}$ being $p>\frac{N}{2s}$. From Theorem \ref{Sembedding} it follows that for all $k\geq 0$
\begin{align*}
C_{*}|w_{k}|_{2^{*}_{s}}^{2}\leq |\psi|_{p}|w_{k}|_{2^{*}_{s}}|\chi_{A_{k}}|_{p_{1}},
\end{align*}
which implies that 
\begin{align}\label{KS}
C_{*}|w_{k}|_{2^{*}_{s}}\leq |\psi|_{p} |\chi_{A_{k}}|_{p_{1}}.
\end{align}
Let $h>k$ and note that $A_{h}\subset A_{k}$ and $|w_{k}|\geq h-k$ on $A_{h}$. Using this fact, \eqref{KS} and applying H\"older inequality on the right hand side in \eqref{KS} we can deduce that for all $h>k$
\begin{align*}
|\chi_{A_{h}}|_{2^{*}_{s}}\leq C(h-k)^{-1}|\psi|_{p} |\chi_{A_{k}}|_{2^{*}_{s}}^{\frac{2^{*}_{s}}{p_{1}}},
\end{align*}
where $C$ is independent of $h$ and $k$.
Since $\frac{2^{*}_{s}}{p_{1}}>1$ and applying Lemma B.1 in \cite{KS} with $\varphi(h)=|\chi_{A_{h}}|_{2^{*}_{s}}$, $k_{0}=0$, $\alpha=1$ and $\beta=\frac{2^{*}_{s}}{p_{1}}$ we can see that $|\chi_{A_{h_{0}}}|_{2^{*}_{s}}=0$ with $h_{0}=C|\psi|_{p}(2|\Omega|^{\frac{1}{2^{*}_{s}}})^{\frac{2^{*}_{s}}{2^{*}_{s}-p_{1}}}$. This implies that there exists $C_{1}>0$ such that $|w|_{\infty}\leq C_{1}|\psi|_{p}$. This together with the definition of $\psi$ gives the desired inequalities.
Choosing $\e>0$ such that $C_{2}\e^{1/p}\leq \beta$, we get
\begin{align}\label{bound1}
f(x, w)+h\leq M \mbox{ in } \Omega.
\end{align}
Set $t_{0}=-\frac{M}{\delta_{0}}<0$, where $\delta_{0}=\min\{\varphi_{1}(x): x\in \Omega'\}>0$, and we show that 
\begin{align}\label{bound2}
t_{0} \varphi_{1}+M\leq \psi \mbox{ in } \Omega.
\end{align}
Indeed, if $x\in \Omega'$ then we have 
$$
t_{0} \varphi_{1}+M=-\frac{M}{\delta_{0}}\varphi_{1}+M\leq 0\leq \psi
$$
and if $x\in \Omega\setminus \Omega'$ we obtain
$$
t_{0} \varphi_{1}+M<M=\psi.
$$
Putting together \eqref{bound1} and \eqref{bound2}, we can see that for any $t\leq t_{0}$ it holds
$$
\psi\geq t_{0} \varphi_{1}+M\geq t\varphi_{1}+f(x, w)+h.
$$
This is enough to deduce that $W_{t}=W$ is a supersolution to \eqref{P} for any $t\leq t_{0}$. 

\end{proof}

\noindent
In what follows, we show that $\Phi$ satisfies the Palais-Smale condition. We start proving the following auxiliary lemma.
\begin{lem}\label{lem6.5}
Assume that $f$ satisfies the first assertion in \eqref{f1}.
Let $(U_{n})$ be a sequence in $\X$ such that 
\begin{equation}\label{6.18}
\left | \iint_{\C} y^{1-2s}\nabla U_{n} \nabla V \, dx dy- \int_{\Omega}[ f(x, u_{n}) +t \varphi_{1}+h] v \, dx\right| \leq \e_{n} \|V\|\quad \forall V\in \X
\end{equation}
where $\e_{n}\rightarrow 0$. Then there exists a constant $M>0$ such that $\|U_{n}^{-}\|\leq M$. 
\end{lem}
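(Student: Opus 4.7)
The plan is to test \eqref{6.18} with $V=-U_n^{-}$, where $U_n^{-}=-\min\{U_n,0\}\in \X$, and exploit the subcritical bound on $f$ from below given by Lemma \ref{lem1} together with the Poincar\'e inequality \eqref{Poinineq}.

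First, I would show that taking $V=-U_n^{-}$ on the left-hand side of \eqref{6.18} produces exactly $\|U_n^{-}\|^{2}$. Indeed, on $\{U_n>0\}$ we have $U_n^{-}=0$ and $\nabla U_n^{-}=0$, while on $\{U_n\leq 0\}$ we have $U_n^{-}=-U_n$ and $\nabla U_n^{-}=-\nabla U_n$, so
$$
\iint_{\C} y^{1-2s}\nabla U_n\nabla(-U_n^{-})\,dxdy=\iint_{\C} y^{1-2s}|\nabla U_n^{-}|^{2}\,dxdy=\|U_n^{-}\|^{2}.
$$
The trace of $-U_n^{-}$ on $\partial^{0}\C$ equals $-u_n^{-}$, and $u_n^{-}$ is supported in $\{u_n<0\}$.

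Next, I would estimate the right-hand side of \eqref{6.18}. Since the first assertion in \eqref{f1} holds, Lemma \ref{lem1} provides $0<\underline{\mu}<\lambda_{1}^{s}$ and $\kappa>0$ with $f(x,t)>\underline{\mu}t-\kappa$ for every $x\in\Omega$ and $t\in\R$. On $\{u_n<0\}$, multiplying this inequality by $u_n<0$ reverses the sign and gives
$$
f(x,u_n)(-u_n^{-})=f(x,u_n)u_n\leq \underline{\mu}\,u_n^{2}-\kappa u_n=\underline{\mu}(u_n^{-})^{2}+\kappa u_n^{-}.
$$
For the forcing term, $\int_{\Omega}(t\varphi_{1}+h)(-u_n^{-})\,dx\leq (|t|\,|\varphi_{1}|_{\infty}+|h|_{\infty})|u_n^{-}|_{1}$. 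Combining these pointwise estimates in \eqref{6.18} yields
$$
\|U_n^{-}\|^{2}\leq \e_{n}\|U_n^{-}\|+\underline{\mu}|u_n^{-}|_{2}^{2}+C\bigl(|u_n^{-}|_{1}+|u_n^{-}|_{2}\bigr),
$$
for some $C>0$ independent of $n$.

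Finally, I would close the argument using the Poincar\'e inequality \eqref{Poinineq}, which gives $|u_n^{-}|_{2}^{2}\leq (\lambda_{1}^{s})^{-1}\|U_n^{-}\|^{2}$, together with the trace embedding from Theorem \ref{Sembedding} which controls $|u_n^{-}|_{1}$ and $|u_n^{-}|_{2}$ by $\|U_n^{-}\|$. This produces
$$
\left(1-\frac{\underline{\mu}}{\lambda_{1}^{s}}\right)\|U_n^{-}\|^{2}\leq (\e_{n}+C')\,\|U_n^{-}\|,
$$
and since $\underline{\mu}<\lambda_{1}^{s}$ the prefactor is strictly positive, so $\|U_n^{-}\|$ is bounded by a constant $M$ independent of $n$. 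The main delicate point is the sign bookkeeping when converting the one-sided lower bound on $f$ in Lemma \ref{lem1} into an upper bound on $f(x,u_n)u_n$ on the set $\{u_n<0\}$; once that is handled correctly, only the spectral gap $\underline{\mu}<\lambda_{1}^{s}$ is needed to absorb the $|u_n^{-}|_{2}^{2}$ term. Note that the superlinear growth hypotheses \eqref{f2} or \eqref{f3} serve only to make \eqref{6.18} meaningful; the estimate itself uses exclusively the one-sided control at $-\infty$ from \eqref{f1}.
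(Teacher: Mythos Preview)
Your proof is correct and follows essentially the same approach as the paper: test \eqref{6.18} with the negative part (the paper takes $V=U_n^{-}$, you take $V=-U_n^{-}$, which is the same computation), use the lower bound $f(x,t)>\underline{\mu}t-\kappa$ on $\{u_n\le 0\}$ from Lemma \ref{lem1}, and close with the Poincar\'e inequality \eqref{Poinineq} and Theorem \ref{Sembedding} exploiting the gap $\underline{\mu}<\lambda_1^s$. One cosmetic remark: since the statement only assumes the \emph{first} assertion in \eqref{f1}, strictly speaking Lemma \ref{lem1} gives $f(x,t)>\underline{\mu}t-\kappa$ only for $t\le 0$ (the paper invokes it in this form), but as you apply it solely on $\{u_n<0\}$ this changes nothing.
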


\begin{proof}
From Lemma \ref{lem1}, we know that 
\begin{equation}\label{6.19}
f(x, t)>\underline{\mu} t- \kappa_{1}\quad \mbox{ for } t\leq 0. 
\end{equation}
Replacing $V$ by $U_{n}^{-}$ in \eqref{6.18} we obtain the following estimate
\begin{equation*}
\iint_{\C} y^{1-2s}|\nabla U_{n}^{-}|^{2} \, dx dy\leq -\int_{\Omega} [f(x, u_{n})+t\varphi_{1}+h] u_{n}^{-} \, dx+ \e_{n} \|U_{n}^{-}\|. 
\end{equation*}
As a consequence, taking into account \eqref{6.19} and recalling that $\varphi_{1}, h\in L^{\infty}(\Omega)$, we deduce
\begin{equation*}
\iint_{\C} y^{1-2s}|\nabla U_{n}^{-}|^{2} \, dx dy \leq \underline{\mu} \int_{\Omega} (u_{n}^{-})^{2} \, dx+ c \int_{\Omega} u_{n}^{-} + \e_{n}\|U_{n}^{-}\|. 
\end{equation*}
Using Poincar\`e inequality \eqref{Poinineq} and Theorem \ref{Sembedding}, we can see that
\begin{equation*}
\left(1-\frac{\underline{\mu}}{ \lambda_{1}^{s}}\right)  \|U_{n}^{-}\|^{2} \leq  c_{1} \|U_{n}^{-}\| + \e_{n}\|U_{n}^{-}\|. 
\end{equation*}
Since $0<\underline{\mu}<\lambda_{1}^{s}$, we can conclude the proof of Lemma \ref{lem6.5}.  
\end{proof}

\begin{lem}\label{lem6.6}
Assume that $f$ satisfies $f$ satisfies \eqref{f1} and \eqref{f2}. 
Then $\Phi$ satisfies $(PS)$ condition. 
\end{lem}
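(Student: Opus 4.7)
The plan is to verify the two standard steps of a Palais--Smale argument for $\Phi$: first, that every PS sequence $(U_n)\subset\X$ is bounded, and second, that any bounded PS sequence admits a strongly convergent subsequence in $\X$. I expect the boundedness to be the delicate step, since $\Phi$ need not be coercive when $c_1\geq \lambda_{1}^{s}$ in \eqref{f2}; the strong convergence will then be routine thanks to the compact embedding in Theorem \ref{Sembedding}.

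For boundedness, by Lemma \ref{lem6.5} we already have $\|U_n^{-}\|\leq M$. Suppose by contradiction that $\|U_n\|\to\infty$ and set $V_n:=U_n/\|U_n\|$. Up to a subsequence, $V_n\rightharpoonup V$ in $\X$ and, by Theorem \ref{Sembedding}, $v_n\to v$ in $L^q(\Omega)$ for any $q\in[1,2^{*}_{s})$ and a.e. Since $\|U_n^{-}\|/\|U_n\|\to 0$, we have $V\geq 0$ a.e.\ in $\Omega$. I first claim $V\equiv 0$: testing the $o(1)$-identity $\Phi'(U_n)\to 0$ in $\X^{*}$ against the extension of $\varphi_{1}$, and using $|\varphi_{1}|_{2}=1$ together with $\int_{\Omega} h\varphi_{1}\,dx=0$, I obtain
$$\lambda_{1}^{s}\int_{\Omega} u_{n}\varphi_{1}\,dx = \int_{\Omega} f(x,u_{n})\varphi_{1}\,dx + t + o(1).$$
Combining with the pointwise bound $f(x,t)\geq\overline{\mu}\,t-\kappa$ from Lemma \ref{lem1} (with $\overline{\mu}>\lambda_{1}^{s}$) yields $(\overline{\mu}-\lambda_{1}^{s})\int_{\Omega}u_{n}\varphi_{1}\,dx\leq C$, so $\int_{\Omega} u_{n}\varphi_{1}\,dx\leq C$. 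Dividing by $\|U_n\|$ and letting $n\to\infty$ gives $\int_{\Omega}v\,\varphi_{1}\,dx\leq 0$; since $V\geq 0$ and $\varphi_{1}>0$ in $\Omega$, this forces $V\equiv 0$ and hence $v_n\to 0$ in $L^{2}(\Omega)$. Next I test $\Phi'(U_n)$ with $V_n$ itself and divide by $\|U_n\|$ to obtain
$$1 = \int_{\Omega}\frac{f(x,u_{n})}{\|U_n\|}\,v_n\,dx + \int_{\Omega}\frac{t\varphi_{1}+h}{\|U_n\|}\,v_n\,dx + o(1).$$
The linear growth \eqref{f2} gives $|f(x,u_n)/\|U_n\||\leq c_{1}|v_n|+c_{2}/\|U_n\|$, and Cauchy--Schwarz combined with $v_n\to 0$ in $L^{2}(\Omega)$ makes both integrals on the right vanish. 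I conclude $1=o(1)$, a contradiction, so $(U_n)$ is bounded in $\X$.

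For the strong convergence step, I extract $U_n\rightharpoonup U$ in $\X$ with $u_n\to u$ in $L^q(\Omega)$ for $q\in[1,2^{*}_{s})$ and apply $\Phi'(U_n)$ to $U_n-U$:
$$\iint_{\C} y^{1-2s}\nabla U_n\,\nabla(U_n-U)\,dx\,dy = \int_{\Omega}[f(x,u_n)+t\varphi_{1}+h]\,(u_n-u)\,dx + o(1).$$
The linear growth bounds $f(\cdot,u_n)$ in $L^{2}(\Omega)$, so the right-hand side tends to $0$ by strong $L^{2}$-convergence of $u_n$; the weak convergence in $\X$ gives $\iint_{\C} y^{1-2s}\nabla U\,\nabla(U_n-U)\,dx\,dy\to 0$, and subtracting yields $\|U_n-U\|^{2}\to 0$. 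The principal obstacle is the exclusion of blow-up in the boundedness step, where both sides of the asymmetric assumption \eqref{f1} enter essentially through Lemma \ref{lem1} and where the positivity and simplicity of $\varphi_{1}$ are exploited to pin down $V\equiv 0$ before extracting the final contradiction.
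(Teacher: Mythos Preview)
Your argument is correct and follows the same global architecture as the paper: argue by contradiction, normalize $V_n=U_n/\|U_n\|$, show that the trace of the weak limit vanishes by pairing with $\varphi_1$ and exploiting the lower bound $f(x,t)\geq \overline{\mu}\,t-\kappa$ from Lemma~\ref{lem1}, and then reach the contradiction $1=o(1)$ by testing with $V_n$; the strong-convergence step is identical.

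The one genuine difference is in how the vanishing of the limiting trace is obtained. The paper first analyzes the rescaled nonlinearity $f(x,u_n)/\|U_n\|$: it splits it via the characteristic function of $\{u_n\leq 0\}$, uses dominated convergence to kill the negative part, extracts a weak $L^{2}$-limit $\gamma\geq 0$ of the positive part, passes to the limit in $\langle \Phi'(U_n),V\rangle/\|U_n\|$ to obtain a limit equation, shows $\gamma\geq \overline{\mu}\,v_0$, and only then tests against $\textup{ext}(\varphi_1)$ to force $v_0=0$ (and $\gamma=0$). You bypass all of this by testing $\langle \Phi'(U_n),\textup{ext}(\varphi_1)\rangle=o(1)$ directly at the level of $U_n$ and using the pointwise bound from Lemma~\ref{lem1} before rescaling, which immediately yields $\int_\Omega u_n\varphi_1\,dx\leq C$ and hence $\int_\Omega v\,\varphi_1\,dx\leq 0$. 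This is shorter and more elementary; the paper's route, on the other hand, identifies the limiting equation satisfied by $V_0$, which is extra information but not needed for the PS verification.

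One small point of wording: what your argument actually proves is that the \emph{trace} $v$ vanishes a.e.\ in $\Omega$, not that the full weak limit $V\in\X$ is zero (the trace map is not injective). This is harmless, since all you use afterwards is $v_n\to 0$ in $L^{2}(\Omega)$.
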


\begin{proof}
Let $(U_{n})$ be a sequence in $\X$ such that 
\begin{align}
&|\Phi(U_{n})|\leq C \label{6.10}\\
&|\langle \Phi'(U_{n}), V\rangle|\leq \e_{n}\|V\| \quad \forall V\in \X \label{6.11},
\end{align}
where $\e_{n}\rightarrow 0$. We begin proving that $(U_{n})$ is bounded.
Assume by contradiction that, up to a subsequence, $(U_{n})$ satisfies 
\begin{equation}\label{6.12}
\|U_{n}\|\rightarrow \infty.
\end{equation}
Let $V_{n}= \frac{U_{n}}{\|U_{n}\|}$. Since $(V_{n})$ is bounded in $\X$, in view of Theorem \ref{Sembedding}, up to a subsequence, we may assume that $V_{n}\rightharpoonup V_{0}$ in $\X$, $v_{n}\rightarrow v_{0}$ in $L^{2}(\Omega)$ and a.e. in $\Omega$. Then there exists  $h_{1}\in L^{2}(\Omega)$ such that $|v_{n}(x)|\leq h_{1}(x)$. It follows from Lemma \ref{lem6.5} that $V_{n}^{-}\rightarrow 0$ in $\X$ and we may assume that $v_{n}^{-}\rightarrow 0$ a.e. in $\Omega$. As a consequence, $v_{0}\geq 0$ in $\Omega$. 

Now, we prove that
\begin{equation}\label{DF1}
g_{n}\equiv \chi_{n} \frac{f(x, u_{n})}{\|U_{n}\|}\rightarrow 0 \quad \mbox{ in } L^{2}(\Omega),
\end{equation}
where $\chi_{n}$ is the characteristic function of the set $\{x\in \Omega: u_{n}(x)\leq 0\}$. Indeed, using \eqref{f2}, we can see that 
$$
|g_{n}|\leq c_{1}h_{1} + \frac{c_{2}}{\|U_{n}\|}.
$$
Since 
\begin{equation}\label{6.21}
|g_{n}|\leq c_{1} \frac{u_{n}^{-}}{\|U_{n}\|}+ \frac{c_{2}}{\|U_{n}\|}\rightarrow 0 \quad \mbox{ a.e. in } \Omega
\end{equation}
we can apply the Dominated Convergence Theorem to deduce that \eqref{DF1} holds.\\
On the other hand, by \eqref{DF1}, we can see that the sequence
\begin{equation}\label{6.22}
\gamma_{n}\equiv (1- \chi_{n}) \frac{f(x, u_{n})}{\|U_{n}\|}\rightharpoonup \gamma \quad \mbox{ in } L^{2}(\Omega)
\end{equation}
for some $\gamma\in L^{2}(\Omega)$ such that $\gamma \geq 0$. To prove this, we first note that \eqref{f2} yields
\begin{equation*}
|\gamma_{n}|\leq c_{1}h_{1} + \frac{c_{2}}{\|U_{n}\|} \leq c_{1}h_{1} +1\in L^{2}(\Omega). 
\end{equation*}
The positiveness of $\gamma$ comes from the following consideration. From the second inequality in \eqref{f1} there exists $r_{0}>0$ such that $f(x, t)\geq 0$ for $t\geq r_{0}$. Let $\xi_{n}$ be the characteristic function of the set $\{x\in \Omega : u_{n}(x) \geq r_{0}\}$. Clearly $\xi_{n}\gamma_{n}\rightharpoonup \gamma$ in $L^{2}(\Omega)$. Then $\gamma\geq 0$ because $\xi_{n}\gamma_{n}$ belongs to the cone of non-negative functions of $L^{2}(\Omega)$, which is a closed and convex set. 

Now, we go back to \eqref{6.11}. Dividing both members by $\|U_{n}\|$ and passing to the limit  as $n\rightarrow \infty$, we obtain
\begin{equation}\label{6.23}
\iint_{\C} y^{1-2s}\nabla V_{0} \nabla V \, dx dy - \int_{\Omega} \gamma v\, dx=0 \quad \forall V\in \X. 
\end{equation}
Here we have used \eqref{6.12}, \eqref{6.21}, \eqref{6.22} and $h, \varphi_{1}\in L^{\infty}(\Omega)$.
It follows from Lemma \ref{lem1} that there is $c>0$ such that $f(x, t)\geq \overline{\mu} t-c$ for $x\in \Omega$ and $t\geq 0$. Therefore $\gamma_{n}\geq \overline{\mu} v_{n}^{+}- c\|U_{n}\|^{-1}$, and taking the limit as $n\rightarrow \infty$, we deduce $\gamma \geq \overline{\mu} v_{0}$. In view of \eqref{6.23} with $V= \textup{ext}(\varphi_{1})(x, y)(=\varphi_{1}(x) \theta(\sqrt{\lambda_{1}}y))$, we obtain 
\begin{equation*}
\lambda_{1}^{s}\int v_{0}\varphi_{1} \, dx= \iint_{\C} y^{1-2s}\nabla V_{0} \nabla \textup{ext}(\varphi_{1}) \, dx dy= \int_{\Omega} \gamma \varphi_{1} \, dx\geq \overline{\mu} \int_{\Omega} v_{0}\varphi_{1} \, dx. 
\end{equation*}
Since $\overline{\mu} >\lambda_{1}^{s}$, we can conclude that $v_{0}\equiv 0$. This and \eqref{6.23} give $\gamma \equiv 0$. \\
Finally, using \eqref{6.11} with $V= V_{n}$, and dividing both members by $\|U_{n}\|$, we get
\begin{equation*}
\left| \iint_{\C} y^{1-2s}|\nabla V_{n}|^{2} \, dx dy- \int_{\Omega} \frac{[f(x, u_{n})+t\varphi_{1}+h]}{\|U_{n}\|}v_{n} \right| \leq \e_{n} \|U_{n}\|^{-1}. 
\end{equation*}
This gives a contradiction because the term on the left hand side goes to $1$ and the right hand side converges to $0$.
Therefore, $(U_{n})$ is bounded in $\X$, and in view of Theorem \ref{Sembedding}, we may assume that 
\begin{align}\label{compconv}
U_{n}\rightharpoonup U \mbox{ in } \X, \mbox{ and }
u_{n}\rightarrow u \mbox{ in } L^{q}(\Omega) \quad \forall q\in [1, 2^{*}).
\end{align}
In view of \eqref{6.11}, we can see that
\begin{equation}\label{6.6}
\left| \iint_{\C} y^{1-2s}\nabla U_{n} \nabla V\, dx dy - \int_{\Omega} [f(x, u_{n})+t\varphi_{1}+h] v \, dx \right| \leq \e_{n} \|V\|
\end{equation}
for any $V\in \X$. Taking $V=U_{n}-U$ and using the continuity of the Nemytskii operator $f(x, u)$ (by \eqref{f2}) and \eqref{compconv}, we can see that \eqref{6.6} yields
$$
\iint_{\C} y^{1-2s}\nabla U_{n} \nabla (U_{n}-U)\, dx dy\rightarrow 0.
$$
Hence $\|U_{n}\|\rightarrow \|U\|$, and recalling that $U_{n}\rightharpoonup U$ in $\X$ we deduce that $U_{n}\rightarrow U$ in $\X$.
\end{proof}

\begin{lem}\label{PSsl}
Assume that $f$ satisfies \eqref{f1}, \eqref{f3} and \eqref{f4}. Then $\Phi$ satisfies the Palais-Smale condition.
\end{lem}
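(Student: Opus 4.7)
Let $(U_n)\subset\X$ be a sequence with $|\Phi(U_n)|\leq C$ and $|\langle\Phi'(U_n),V\rangle|\leq\e_n\|V\|$ for all $V\in\X$, where $\e_n\to 0$. As in Lemma \ref{lem6.6}, the proof splits into two steps: a uniform bound on $\|U_n\|$, followed by upgrading weak convergence to strong convergence via the compact embedding of Theorem \ref{Sembedding}.

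For the boundedness step, since \eqref{f1} and \eqref{f3} are in force, Lemma \ref{lem6.5} provides $M>0$ with $\|U_n^-\|\leq M$. To control the full norm I would exploit the Ambrosetti--Rabinowitz condition \eqref{f4} via the classical identity
\begin{equation*}
\Phi(U_n)-\frac{1}{\theta}\langle\Phi'(U_n),U_n\rangle=\left(\frac{1}{2}-\frac{1}{\theta}\right)\|U_n\|^2+\int_\Omega\Big[\frac{1}{\theta}f(x,u_n)u_n-F(x,u_n)\Big]\,dx+\left(\frac{1}{\theta}-1\right)\int_\Omega(t\varphi_1+h)u_n\,dx.
\end{equation*}
I would split the middle integral over $\{u_n\geq r\}$ and $\{u_n<r\}$: on the former set \eqref{f4} forces the integrand to be nonnegative and it is dropped; on the latter one has $|u_n|\leq r+u_n^-$, so \eqref{f3} together with Theorem \ref{Sembedding} bounds the contribution by $C(1+\|U_n^-\|^p)$, which is uniformly controlled by the preceding bound on $\|U_n^-\|$. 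The affine forcing term is absorbed via H\"older and Theorem \ref{Sembedding} by $C\|U_n\|$. Collecting the estimates yields $\|U_n\|^2\leq C(1+\|U_n\|)$, whence $(U_n)$ is bounded in $\X$.

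For the convergence step, up to a subsequence $U_n\rightharpoonup U$ in $\X$ and $u_n\to u$ in $L^q(\Omega)$ for every $q\in[1,2^{*}_{s})$. Testing $\langle\Phi'(U_n),U_n-U\rangle\to 0$ and noting that $(f(\cdot,u_n))$ is bounded in $L^{p'}(\Omega)$ with $p'=p/(p-1)$ (by \eqref{f3} and $(p-1)p'=p$), H\"older's inequality together with $|u_n-u|_p\to 0$ gives $\int_\Omega f(x,u_n)(u_n-u)\,dx\to 0$; the lower order terms vanish similarly. Therefore $\iint_\C y^{1-2s}\nabla U_n\cdot\nabla(U_n-U)\,dx dy\to 0$, and combined with weak convergence this forces $\|U_n\|\to\|U\|$, from which strong convergence in the Hilbert space $\X$ is immediate. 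The main obstacle is the first step: \eqref{f4} only asserts the sign of $\frac{1}{\theta}f(x,t)t-F(x,t)$ for $t\geq r$, so the complementary region has to be tamed separately, and this is exactly where Lemma \ref{lem6.5}---whose one-sided hypothesis uses only the left half of \eqref{f1}---comes into play through the uniform a priori control of $\|U_n^-\|$.
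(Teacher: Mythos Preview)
Your proof is correct and follows essentially the same approach as the paper: both invoke Lemma~\ref{lem6.5} to bound $\|U_n^-\|$ a priori, then use the Ambrosetti--Rabinowitz condition \eqref{f4} to control the remaining (positive) part, and finish with the standard compactness argument for strong convergence. The only cosmetic difference is that the paper splits $U_n=U_n^+-U_n^-$ at the outset and tests the derivative with $U_n^+$, whereas you work with the full identity $\Phi(U_n)-\tfrac{1}{\theta}\langle\Phi'(U_n),U_n\rangle$ and instead split the domain into $\{u_n\geq r\}$ and $\{u_n<r\}$; both routes rely on the same two ingredients and yield the same conclusion.
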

\begin{proof}
In view of Lemma \ref{lem6.5}, we know that $\|U_{n}^{-}\|\leq c$. From the first statement in \eqref{f1}, we can find $0<\mu<\lambda_{1}^{s}$ and $c>0$ such that 
$$
F(x, t)\leq \frac{\mu}{2}t^{2}-c t  \mbox{ for any } x\in \Omega, t\leq 0.
$$ 
As a consequence
\begin{equation}\label{gabba}
\int_{\Omega} F(x, -u_{n}^{-})\, dx \leq \frac{\mu}{2}\int_{\Omega}(u_{n}^{-})^{2}\, dx-c\int_{\Omega}u_{n}^{-}\, dx\leq C.
\end{equation}
Taking into account \eqref{6.10}, \eqref{gabba}, $\|U_{n}^{-}\|\leq c$, $\varphi_{1}, h\in L^{\infty}(\Omega)$, Theorem \ref{tracethm} and Theorem \ref{Sembedding}, we have
\begin{equation}\label{6.25}
\frac{1}{2} \iint_{\C} y^{1-2s} |\nabla U_{n}^{+}|^{2} \, dx-\int_{\Omega} [F(x, u_{n}^{+})+t \varphi_{1} u_{n}^{+}+h u_{n}^{+}]\, dx\leq C.
\end{equation}
Using \eqref{6.11} with $V=U_{n}^{+}$ we get
\begin{equation}\label{6.26}
\left|\iint_{\C}y^{1-2s} |\nabla U_{n}^{+}|^{2} \, dx-\int_{\Omega} [f(x, u_{n}^{+})u_{n}^{+}+t\varphi_{1} u_{n}^{+}+h u_{n}^{+}] \,dx\right|\leq \e_{n} \|U_{n}^{+}\|.
\end{equation}
Then multiplying \eqref{6.25} by $\theta$ and subtracting \eqref{6.26}, we can infer that
\begin{align*}
\left(\frac{\theta}{2}-1\right)  \|U_{n}^{+}\|^{2} &\leq \int_{\Omega} [\theta F(x, u_{n}^{+})-f(x, u_{n}^{+})u_{n}^{+}] \, dx+c|u_{n}^{+}|_{1}+ \e_{n} \|U_{n}^{+}\|+C \\
&\leq C+C\|U_{n}^{+}\|+\e_{n}\|U_{n}^{+}\|
\end{align*}
where we have used \eqref{f4}, Theorem \ref{tracethm} and Theorem \ref{Sembedding}.
Since $\theta>2$, we deduce that $\|U_{n}^{+}\|\leq C$. Then $(U_{n})$ is bounded in $\X$, and we can proceed as in the last part of the proof of Lemma \ref{lem6.6} (here the continuity of the Nemytskii operator $f(x, u)$ is due to \eqref{f3} and Theorem \ref{Sembedding}) to obtain a convergent subsequence.

\end{proof}

\section{proof of theorem \ref{thm1}}
This section is devoted to the proof of the first main result of this work.
From now on we assume that $t\leq t_{0}$, where $t_{0}$ is given by Lemma \ref{lem4}. By Lemma \ref{lem3} and Lemma \ref{lem4} we 
can see that 
\begin{equation}\label{ter}
K=\{U\in \X: v_{t}\leq u\leq w_{t} \mbox{ in } \Omega\}
\end{equation}
is a not empty closed and convex subset of $\X$.

\begin{proof}[Proof of Theorem \ref{thm1}]
In order to find a first solution to \eqref{P}, we aim to apply Proposition \ref{Lm} to the functional $\Phi$ defined in \eqref{phi} and $K$  defined as in \eqref{ter}. 
We begin proving that $\Phi$ is bounded below in $K$. Indeed, if $U\in K$, then $|u(x)|\leq |v_{t}(x)|+|w_{t}(x)|$ and this implies that $|u|_{2^{*}}\leq C$. Taking into account \eqref{f2} and \eqref{f3}, we deduce that $\Phi(U)\geq \frac{1}{2} \|U\|^{2}-C$, that is $(ii)$ holds. In view of Lemma \ref{lem6.6} and Lemma \ref{PSsl}, we also know that $(iii)$ is satisfied. In what follows, we show that the condition $(i)$ is verified changing the norm in $\X$ as follows. \\  
Take $M>0$ such that the function 
\begin{equation}\label{6.38}
\xi\mapsto g(x, \xi):= f(x, \xi) + M\xi + t \varphi_{1}(x)+ h(x)
\end{equation}
is increasing in $\xi\in [a, b]$ (for each $x\in \overline{\Omega}$ fixed), where $a= \min v_{t}$ and $b= \max w_{t}$. 

In virtue of \eqref{Poinineq}, we can see that the norm in $\X$ defined as
\begin{equation*}
\|U\|_{e}^{2} = \iint_{\C} y^{1-2s} |\nabla U|^{2}\,dxdy + M\int_{\Omega} u^{2} \,dx
\end{equation*}
is equivalent to the standard norm $\|\cdot\|$. Let us denote by $\langle \cdot, \cdot\rangle_{e}$ the inner product in $\X$ corresponding to $\|\cdot\|_{e}$. Then we can rewrite \eqref{P} as
\begin{equation}\label{6.39}
\left\{
\begin{array}{ll}
-\dive(y^{1-2s} \nabla U)= 0 &\mbox{ in } \C\\
U=0 &\mbox{ on } \partial_{L} \C \\
\frac{\partial U}{\partial \nu^{1-2s}}=-M u+g(x, u) &\mbox{ on } \partial^{0} \C.
\end{array}
\right.
\end{equation}
The functional associated to \eqref{6.39} is given by
\begin{equation*}
\Psi(U)= \frac{1}{2}\|U\|^{2}_{e} - \int_{\Omega} G(x, u)\, dx, 
\end{equation*}
where $G(x, t)= \int_{0}^{t} g(x, \xi) \, d\xi$.

Let us note that $\Psi=\Phi$ on $K$, $\Psi\in C^{1}(\X, \R)$, $\Psi$ satisfies $(PS)$ and it has the same critical points as the original functional $\Phi$. 
Now we show that $\Psi$ verifies the condition $(i)$ of Proposition \ref{Lm}. 
Let $U\in K$ and let $V= (I- \Psi')U$. This means that $V$ satisfies
\begin{equation*}
\langle V, \varGamma \rangle_{e}= \langle U, \varGamma \rangle_{e}-\langle U, \varGamma \rangle_{e}+ \int_{\Omega} g(x, u)\gamma \, dx
\end{equation*}
for all $\varGamma \in \X$. Then, recalling that $V_{t}$ is a subsolution to \eqref{P}, we deduce that 
\begin{equation*}
\langle V- V_{t}, \varGamma \rangle_{e}\geq \int_{\Omega} [g(x, u)- g(x, v_{t})] \gamma, \quad \forall \varGamma  \in \X, \varGamma \geq 0
\end{equation*}
Since $U\in K$ and $g(\cdot, t)$ is increasing in $t$, we deduce $g(x, u)- g(x, v_{t})\geq 0$ in $\Omega$. Then, $V-V_{t}$ satisfies 
$$
\iint_{\C} y^{1-2s} \nabla (V-V_{t}) \nabla \varGamma  \, dx dy\geq -M\int_{\Omega} (v-v_{t}) \gamma \, dx \quad  \mbox{ for all }  \varGamma  \in \X, \varGamma \geq 0.
$$
Since $-M<0<\lambda_{1}^{s}$, we can apply Lemma \ref{lem2} to get $V\geq V_{t}$. In similar fashion, we can prove that $V\leq W_{t}$ in $\C$. Therefore $V\in K$. Hence, using Proposition \ref{Lm}, we  can find a critical point $U_{0}\in K$ of $\Phi$ such that 
\begin{equation}\label{absurd}
\Psi(U_{0})=\inf_{U\in K} \Psi(U).
\end{equation}
We point out that this proposition ensures only that $U_{0}$ is a minimum of $\Phi$ restricted to $K$.

Now, we aim to apply Proposition \ref{MP} to deduce the existence of a second solution to \eqref{P}. In order to achieve our goal, we begin proving that $U_{0}$ is indeed a local minimum. Let us note that $U_{0}\in \X$ is a solution to \eqref{P}, so we get $U_{0}\in C^{0, \beta}(\bar{\C})$ for some $\beta\in (0, 1)$, by Lemma \ref{boundlem}. \\
We argue by contradiction, and we assume  that $U_{0}$ is not a local minimum of $\Phi$ in $\X$. Thus, for every $\varepsilon>0$ there exists $U_{\e}\in B_{\e}\equiv \overline{B_{\e}(U_{0})}=\{U\in \X: \|U-U_{0}\|\leq \e\}$ such that $\Psi(U_{\e})<\Psi(U_{0})$. Let us consider the functional $\Phi$ restricted to $B_{\e}$, and using Theorem \ref{thm3.1}, there exist $Z_{\e}\in B_{\e}$ and $\lambda_{\e}\leq 0$ such that 
\begin{align}
&\Psi(Z_{\e})= \inf_{B_{\e}} \Psi \leq \Psi(U_{\e})< \Psi(U_{0}) \label{6.40}\\
&\Psi'(Z_{\e})= \lambda_{\e}(Z_{\e}- U_{0}). \label{6.41}
\end{align} 
Let us observe that \eqref{6.41} means 
\begin{equation*}
\langle Z_{\e}, \varGamma \rangle_{e} - \int_{\Omega} g(x, z_{\e}) \gamma \, dx= \lambda_{\e} \langle Z_{\e}- U_{0}, \varGamma \rangle_{e} \quad \forall \varGamma \in \X,
\end{equation*}
or equivalently
\begin{equation}\label{6.43}
\langle Z_{\e}-U_{0}, \varGamma \rangle_{e} =\frac{1}{1- \lambda_{\e}} \int_{\Omega} [g(x, z_{\e}) - g(x, u_{0})]\gamma \, dx\quad \forall \varGamma \in \X. 
\end{equation}
Clearly, from $U_{0}\in C^{0, \beta}(\bar{\C})$ and arguing as in Lemma \ref{boundlem} we get $Z_{\e}\in C^{0, \gamma}(\bar{\C})$ for some $\gamma\in (0,1)$, and $Z_{\e}\rightarrow U_{0}$ in $\X$ as $\e\rightarrow 0$. Then, using $(1-\lambda_{\e})^{-1}\in (0, 1]$, \eqref{f2} (or \eqref{f3}), \eqref{6.43}, $\|Z_{\e}-U_{0}\|\leq \e$ and Lemma \ref{boundlem}, we can see that there exists $C_{1}>0$ independent of $\e$ such that 
$$
|z_{\e}-u_{0}|_{\infty}\leq C_{1}.
$$ 
Therefore we deduce that 
$$
(1-\lambda_{\e})^{-1}  |(g(\cdot, z_{\e}) - g(\cdot, u_{0}))|_{\infty}\leq C_{2},
$$
for some $C_{2}>0$ independent of $\e$,
and using Theorem $1.5$ (1) in \cite{CStinga} (see also Corollary $4.8$ in \cite{BrCDPS}) we have 
$$
|z_{\e}-u_{0}|_{C^{0, \alpha}(\bar{\Omega})}\leq C_{3} \mbox{ for some } \alpha\in (0, 1).
$$ 
Hence, from the Ascoli-Arzel\'a Theorem we can see that $z_{\e}\rightarrow u_{0}$ uniformly in $\bar{\Omega}$ as $\varepsilon \rightarrow 0$. \\
On the other hand, in view of Lemma \ref{lem3} and Lemma \ref{lem4}, we know that $V_{t}$ and $W_{t}$ are H\"older continuous up to the boundary and that they are not solutions to \eqref{P}. 
Then, from Lemma \ref{MPcdds}, we get $V_{t}<U_{0}$ in any compact of $\Omega\times [0, \delta)$, and so $v_{t}< u_{0}$ in $\Omega$. This fact and the above uniform convergence, implies that $v_{t}< z_{\e}$ in $\Omega$, provided that $\e$ is sufficiently small. A similar argument yields $z_{\e}< w_{t}$ in $\Omega$ for any $\e$ small enough. Thus $Z_{\e} \in K$ for small $\e$, and using \eqref{absurd} and \eqref{6.40} we get a contradiction. Hence $U_{0}$ is a local minimum of $\Phi$ in $\X$. 

Finally, we prove that there exists a function $Z\in \X$ such that $\Phi(Z)<\Phi(U_{0})$.
Let us note that the extension $\textup{ext}(\varphi_{1})(x, y)=\varphi_{1}(x) \theta(\sqrt{\lambda_{1}}y)$ of $\varphi_{1}$, is such that $\|\textup{ext}(\varphi_{1})\|^{2}=\lambda_{1}^{s}$, so from \eqref{ggrowth} it holds
$$
\Phi(R\, \textup{ext}(\varphi_{1}))\leq \frac{R^{2}}{2}(\lambda_{1}^{s}-\overline{\mu})+CR+C, \mbox{ for all } R>0.
$$
Recalling that $\overline{\mu}>\lambda_{1}^{s}$, we can choose $R>0$ sufficiently large such that $\Phi(R \, \textup{ext}(\varphi_{1}))<\Phi(U_{0})$. As a consequence, we can apply Proposition \ref{MP} to find a second solution $U_{1}\neq U_{0}$ to \eqref{P}.
\end{proof}


\section{Fractional periodic Ambrosetti-Prodi type problem}
It is easy to see that the results established in the previous sections can be extended when we consider a periodic version of the problem \eqref{P0}, that is
\begin{equation*}
\left\{
\begin{array}{ll}
(-\Delta+m^{2})^{s} u = f(x, u)+t+h \, &\mbox{ in } \, (-\pi, \pi)^{N},\\
u(x+2\pi e_{i})=u(x) \, &\mbox{ for all } \, x\in\R^{N}, i=1, \dots, N
\end{array}
\right.
\end{equation*}
where $m>0$, $s\in (0, 1)$, $N>2s$ and $h\in L^{\infty}((-\pi, \pi)^{N})$ is such that $\int_{(-\pi, \pi)^{N}} h \, dx=0$. \\
Here $f: \R^{N}\times \R \rightarrow \R$ is a locally Lipschitz function, $2\pi$-periodic in $x$, such that
\begin{equation*}
\limsup_{t\rightarrow -\infty} \frac{f(x, t)}{t}<m^{2s}<\liminf_{t\rightarrow \infty} \frac{f(x, t)}{t} \quad \mbox{ uniformly for a.e. } x\in (-\pi, \pi)^{N},
\end{equation*}
and verifying either \eqref{f2} or \eqref{f3} and \eqref{f4}.\\
Let us denote by $\Y$ the completion of
\begin{align*}
\mathcal{C}_{2\pi}^{\infty}(\overline{\R^{N+1}_{+}}):=\Bigl\{&U\in \mathcal{C}^{\infty}(\overline{\R^{N+1}_{+}}): U(x+2\pi e_{i},y)=U(x,y) \\
&\mbox{ for every } (x,y)\in \overline{\R_{+}^{N+1}}, i=1, \dots, N \Bigr\}
\end{align*}
under the $H^{1}(\mathcal{S}_{2\pi},y^{1-2s})$-norm
\begin{equation*}
\|U\|_{\Y}:=\sqrt{\iint_{\mathcal{S}_{2\pi}} y^{1-2s} (|\nabla U|^{2}+m^{2}U^{2}) \, dxdy} \,.
\end{equation*}
It is worth recalling the following fundamental results \cite{A1, A2} concerning the spaces $\Y$ and $\h$.  
\begin{thm}\cite{A1, A2}\label{tracethmper}
There exists a surjective linear operator $\textup{Tr} : \Y\rightarrow \h$  such that:
\begin{itemize}
\item[$(i)$] $\textup{Tr}(U)=U|_{\partial^{0} \mathcal{S}_{2\pi}}$ for all $U\in \mathcal{C}_{2\pi}^{\infty}(\overline{\R^{N+1}_{+}}) \cap \Y$;
\item[$(ii)$] $\textup{Tr}$ is bounded and
\begin{equation}\label{tracein}
\sqrt{\kappa_{s}} |\textup{Tr}(U)|_{\h}\leq \|U\|_{\Y},
\end{equation}
for every $U\in \Y$.
In particular, equality holds in \eqref{tracein} for some $V\in \Y$ if and only if $V$ weakly solves the following equation
$$
-\dive(y^{1-2s} \nabla U)+m^{2}y^{1-2s}U =0 \, \mbox{ in } \, \mathcal{S}_{2\pi}.
$$
\end{itemize}
\end{thm}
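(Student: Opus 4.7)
The plan is to reduce the trace theorem to a sequence of one-dimensional minimization problems via Fourier expansion in the periodic variable, exactly as in the unweighted setting but with the weight $y^{1-2s}$ treated by the explicit ODE solution $\theta$ already used in the paper. First I would work on the dense subspace $\mathcal{C}^{\infty}_{2\pi}(\overline{\R^{N+1}_{+}})\cap \Y$, where the trace is literally the restriction $U(\cdot,0)$; since $U$ is $2\pi$-periodic in $x$, I expand
\begin{equation*}
U(x,y)=\frac{1}{(2\pi)^{N/2}}\sum_{k\in\Z^{N}} c_{k}(y)\,e^{\imath k\cdot x},\qquad c_{k}(y)=\frac{1}{(2\pi)^{N/2}}\int_{(-\pi,\pi)^{N}}U(x,y)e^{-\imath k\cdot x}\,dx.
\end{equation*}
Parseval's identity then decouples the $\Y$-norm as
\begin{equation*}
\|U\|_{\Y}^{2}=\sum_{k\in\Z^{N}}\int_{0}^{\infty} y^{1-2s}\bigl(|c_{k}'(y)|^{2}+(|k|^{2}+m^{2})|c_{k}(y)|^{2}\bigr)\,dy,
\end{equation*}
while $|\textup{Tr}(U)|_{\h}^{2}=\sum_{k}(|k|^{2}+m^{2})^{s}|c_{k}(0)|^{2}$.

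Next I would solve, for each frequency $k$, the weighted one-dimensional variational problem
\begin{equation*}
\mathcal{M}(\mu):=\inf\Bigl\{\int_{0}^{\infty} y^{1-2s}(|\phi'|^{2}+\mu^{2}|\phi|^{2})\,dy \,:\, \phi\in H^{1}(\R_{+},y^{1-2s}),\ \phi(0)=1\Bigr\},\qquad \mu^{2}=|k|^{2}+m^{2}.
\end{equation*}
The Euler--Lagrange equation is $-(y^{1-2s}\phi')'+\mu^{2}y^{1-2s}\phi=0$, and the scaling $\phi(y)=\theta(\mu y)$ reduces it to the ODE satisfied by the function $\theta$ introduced earlier (with $\theta(0)=1$ and $-\lim_{y\to 0^{+}} y^{1-2s}\theta'(y)=\kappa_{s}$). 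An integration by parts on the minimizer and the change of variables $z=\mu y$ then give $\mathcal{M}(\mu)=-\lim_{y\to 0^{+}}y^{1-2s}\phi'(y)=\kappa_{s}\mu^{2s}$, attained uniquely by $\phi(y)=\theta(\mu y)$. Summing in $k$ I immediately get
\begin{equation*}
\|U\|_{\Y}^{2}\ge \sum_{k}\kappa_{s}(|k|^{2}+m^{2})^{s}|c_{k}(0)|^{2}=\kappa_{s}\,|\textup{Tr}(U)|_{\h}^{2},
\end{equation*}
which is (ii); this also proves (i) on the dense subspace and lets me extend $\textup{Tr}$ by continuity to a bounded linear map on all of $\Y$ via the definition of the completion.

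For surjectivity I would construct a right inverse by explicit $s$-harmonic extension: given $u=(2\pi)^{-N/2}\sum_{k}c_{k}e^{\imath k\cdot x}\in\h$, set
\begin{equation*}
\mathrm{ext}(u)(x,y):=\frac{1}{(2\pi)^{N/2}}\sum_{k\in\Z^{N}} c_{k}\,\theta\bigl(\sqrt{|k|^{2}+m^{2}}\,y\bigr)e^{\imath k\cdot x}.
\end{equation*}
Using the optimal value $\mathcal{M}(\mu)=\kappa_{s}\mu^{2s}$ computed above, a direct Parseval calculation yields $\|\mathrm{ext}(u)\|_{\Y}^{2}=\kappa_{s}|u|_{\h}^{2}<\infty$, so $\mathrm{ext}(u)\in\Y$ and $\textup{Tr}(\mathrm{ext}(u))=u$, hence $\textup{Tr}$ is onto. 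Finally, for the equality assertion, $\sqrt{\kappa_{s}}|\textup{Tr}(V)|_{\h}=\|V\|_{\Y}$ occurs precisely when each Fourier mode saturates $\mathcal{M}(\mu_{k})$, i.e.\ $c_{k}(y)=c_{k}(0)\theta(\mu_{k}y)$ for every $k$; rewriting this frequency-by-frequency identity in the space variable shows that $V$ is exactly the $s$-harmonic extension of its trace and hence weakly solves $-\mathrm{div}(y^{1-2s}\nabla V)+m^{2}y^{1-2s}V=0$ in $\mathcal{S}_{2\pi}$.

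The main obstacle I anticipate is the rigorous ODE analysis at $y=0$: establishing the existence, uniqueness and precise boundary behaviour of $\theta$ so that $-\lim_{y\to 0^{+}}y^{1-2s}\theta'(y)=\kappa_{s}$ holds, then justifying the boundary term in the integration by parts for arbitrary admissible $\phi$ (whose trace at $0$ is defined only in a weak sense). Once this weighted trace identity is in place, the Fourier machinery and the density of $\mathcal{C}^{\infty}_{2\pi}(\overline{\R^{N+1}_{+}})\cap\Y$ in $\Y$ make the rest essentially bookkeeping.
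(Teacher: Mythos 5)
Your proposal is correct and reconstructs exactly the Fourier-decomposition argument used in the cited references \cite{A1, A2} (the paper itself only recalls the theorem without reproving it): expand in Fourier modes, reduce to the one-dimensional weighted minimization whose optimal value is $\kappa_{s}(|k|^{2}+m^{2})^{s}$ attained by $\theta(\sqrt{|k|^{2}+m^{2}}\,y)$, and sum to obtain boundedness, surjectivity via the explicit extension, and the equality characterization. The ODE facts about $\theta$ that you flag as the remaining obstacle are precisely the ones already taken for granted elsewhere in the paper, so nothing is missing.
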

\begin{thm}\cite{A1,A2}\label{compactembedding}
Let $N> 2s$. Then  $\textup{Tr}(\Y)$ is continuously embedded in $L^{q}(-\pi,\pi)^{N}$ for any  $1\leq q \leq 2^{*}_{s}$.  Moreover,  $\textup{Tr}(\Y)$ is compactly embedded in $L^{q}(-\pi,\pi)^{N}$  for any  $1\leq q < 2^{*}_{s}$.
\end{thm}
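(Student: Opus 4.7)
The plan is to deduce the embeddings of $\textup{Tr}(\Y)$ into $L^{q}((-\pi,\pi)^{N})$ from the corresponding embeddings of $\h$, exploiting the surjectivity and continuity of $\textup{Tr}\colon \Y \to \h$ provided by Theorem \ref{tracethmper}. Since $\textup{Tr}$ is a bounded surjection between Hilbert spaces, the inequality \eqref{tracein} combined with the open mapping theorem gives $\textup{Tr}(\Y) = \h$ with equivalent norms; in fact, the variational characterization in Theorem \ref{tracethmper}(ii) selects, for each $u \in \h$, a unique harmonic-type extension $U \in \Y$ realizing the equality $\sqrt{\kappa_{s}}|u|_{\h} = \|U\|_{\Y}$. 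It therefore suffices to prove that $\h \hookrightarrow L^{q}((-\pi,\pi)^{N})$ continuously for $q \in [1, 2^{*}_{s}]$ and compactly for $q \in [1, 2^{*}_{s})$.

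First I would establish the continuous embedding for $q = 2$: Parseval together with the pointwise bound $|c_{k}|^{2} \leq m^{-2s}(|k|^{2}+m^{2})^{s}|c_{k}|^{2}$ gives $|u|_{2}^{2} \leq m^{-2s}|u|_{\h}^{2}$. For $q > 2$ the natural route is to compare $|\cdot|_{\h}$ with the standard periodic fractional Sobolev norm on the cube, by noting that, for every $m > 0$, the Fourier symbols $(1+|k|^{2})^{s}$ and $(|k|^{2}+m^{2})^{s}$ are comparable on $\Z^{N}$. A direct computation with the Fourier series then yields
$$
\iint_{(-\pi,\pi)^{2N}} \frac{|u(x)-u(y)|^{2}}{|x-y|^{N+2s}}\, dx\, dy \leq C |u|_{\h}^{2},
$$
so that the fractional Sobolev embedding $H^{s}((-\pi,\pi)^{N}) \hookrightarrow L^{2^{*}_{s}}((-\pi,\pi)^{N})$, valid for $N > 2s$, closes the endpoint $q = 2^{*}_{s}$; intermediate exponents follow by interpolation between $L^{2}$ and $L^{2^{*}_{s}}$.

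For the compact embedding when $q < 2^{*}_{s}$, I would argue in Fourier space. Given a bounded sequence $(u_{n}) \subset \h$, decompose $u_{n} = u_{n}^{<R} + u_{n}^{\geq R}$ according to $|k| \leq R$ or $|k| > R$. The tail estimate $|u_{n}^{\geq R}|_{2}^{2} \leq (R^{2}+m^{2})^{-s}|u_{n}|_{\h}^{2}$ makes the high-frequency part uniformly small in $L^{2}$ as $R \to \infty$, while on the finite-dimensional subspace $\{|k| \leq R\}$ one extracts a convergent subsequence; a diagonal argument then yields strong $L^{2}$-convergence. Interpolation with the $L^{2^{*}_{s}}$-bound from the preceding step upgrades this to $L^{q}$-convergence for every $q < 2^{*}_{s}$. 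Alternatively, once the norm equivalence with the periodic $H^{s}$-norm is in hand, one may simply invoke Rellich--Kondrachov on the cube.

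The main obstacle is the continuous endpoint embedding into $L^{2^{*}_{s}}$: the Parseval and Fourier-tail arguments are routine, whereas the fractional Sobolev inequality on the cube needs either a Gagliardo-type estimate in the spirit of \cite{DPV} or a periodic analogue of the extension-based proof used for the Dirichlet trace space $\A$ in Theorem \ref{Sembedding}. Both paths are available, but care is required because of the periodic boundary conditions and because the $\Y$-valued harmonic extension is defined with respect to the shifted weighted operator $-\dive(y^{1-2s}\nabla \cdot) + m^{2}y^{1-2s}\cdot$; this is where essentially all of the work is concentrated, the reduction to the scalar-weighted Sobolev inequality being then a standard interpolation/density exercise.
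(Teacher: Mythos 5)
This theorem is stated in the paper without proof, being quoted from \cite{A1, A2}; in those references, and consistently with the framework used throughout this paper, the natural argument runs through the cylindrical extension $U\in\Y$ and weighted trace/Sobolev inequalities in $N+1$ variables, mirroring the Dirichlet-case Theorem \ref{Sembedding} from \cite{BrCDPS, CT, CDDS}. Your proposal works instead directly on the Fourier side of $\h$, which is a legitimate and somewhat more elementary alternative. Two refinements are warranted. First, the appeal to the open mapping theorem is superfluous: for the estimate $|u|_{q}\leq C\|U\|_{\Y}$ you only need the one-sided trace bound $\sqrt{\kappa_{s}}\,|u|_{\h}\leq\|U\|_{\Y}$ from \eqref{tracein}, and in the compactness step weak continuity of $\textup{Tr}$ (boundedness suffices) identifies the limit. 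Second, and more substantively, the passage from $|u|_{\h}$ to the Gagliardo seminorm on $(-\pi,\pi)^{N}$ is not quite a ``direct computation'': you should first verify that $\sum_{k\in\Z^{N}}|k|^{2s}|c_{k}|^{2}$ is comparable to the Gagliardo seminorm taken with the periodic (torus) distance, then use that this distance is dominated by the Euclidean one on the cube to control the cube seminorm, and only then invoke the Lipschitz-domain Sobolev embedding of \cite{DPV}. With that clarified, your $L^{2}$ compactness via Fourier-tail truncation and the interpolation to $L^{q}$ for $q<2^{*}_{s}$ close the argument correctly.
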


\noindent 
As stated in the below result,  the extension technique \cite{CS} works also in periodic setting (see also \cite{RS, ST}).
\begin{thm}\cite{A1, A2}\label{CSper}
Let $u\in \h$. Then, there exists a unique $U\in \Y$ such that
\begin{equation*}
\left\{
\begin{array}{ll}
-\dive(y^{1-2s} \nabla U)+m^{2}y^{1-2s}U =0 &\mbox{ in }\mathcal{S}_{2\pi}  \\
U_{| {\{x_{i}=-\pi\}}}= U_{| {\{x_{i}=\pi\}}} & \mbox{ on } \partial_{L}\mathcal{S}_{2\pi} \\
U(\cdot, 0)=u  &\mbox{ on } \partial^{0}\mathcal{S}_{2\pi}
\end{array}
\right.
\end{equation*}
and
\begin{align*}
-\lim_{y \rightarrow 0^{+}} y^{1-2s}\frac{\partial U}{\partial y}(x,y)=\kappa_{s} (-\Delta+m^{2})^{s}u(x) \mbox{ in } \mathbb{H}^{-s}_{2\pi}.
\end{align*}
\end{thm}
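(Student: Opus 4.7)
The plan is to construct the extension $U$ explicitly via separation of variables in the Fourier basis of the torus, and then verify the PDE, the trace identity, the boundary co-normal identity, and uniqueness. Given $u=\sum_{k\in\Z^{N}} c_{k}\frac{e^{\imath k\cdot x}}{(2\pi)^{N/2}}\in\h$, I would look for $U$ in the separable form
\begin{equation*}
U(x,y)=\sum_{k\in\Z^{N}} c_{k}\,\frac{e^{\imath k\cdot x}}{(2\pi)^{N/2}}\,\theta_{k}(y),
\end{equation*}
and plug this ansatz into the equation $-\dive(y^{1-2s}\nabla U)+m^{2}y^{1-2s}U=0$ on $\T$. The Fourier modes decouple and each profile must satisfy the ODE $\theta_{k}''+\frac{1-2s}{y}\theta_{k}'-(|k|^{2}+m^{2})\theta_{k}=0$ on $(0,\infty)$. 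Rescaling by $\theta_{k}(y)=\theta\bigl(\sqrt{|k|^{2}+m^{2}}\,y\bigr)$ reduces the whole family to the single Bessel-type profile already introduced in Section 2, namely $\theta''+\frac{1-2s}{y}\theta'-\theta=0$, with the normalisation $\theta(0)=1$ and $-\lim_{y\to 0^{+}} y^{1-2s}\theta'(y)=\kappa_{s}$; picking the unique $H^{1}(\R_{+},y^{1-2s})$-solution pins down every $\theta_{k}$ unambiguously.

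Next I would check that this series genuinely defines an element of $\Y$. Using Parseval in the $x$-variables together with the change of variable $\eta=\sqrt{|k|^{2}+m^{2}}\,y$, a direct computation yields
\begin{equation*}
\|U\|_{\Y}^{2}=\sum_{k\in\Z^{N}}|c_{k}|^{2}(|k|^{2}+m^{2})^{s}\int_{0}^{\infty}\eta^{1-2s}\bigl[|\theta'(\eta)|^{2}+|\theta(\eta)|^{2}\bigr]d\eta,
\end{equation*}
and the inner integral equals $\kappa_{s}$ by integrating by parts once and using the profile ODE at the endpoints. This gives the identity $\|U\|_{\Y}^{2}=\kappa_{s}|u|_{\h}^{2}$, so $U\in\Y$ and in particular the equality case of \eqref{tracein} holds. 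The lateral periodic boundary condition on $\partial_{L}\T$ is built into the $e^{\imath k\cdot x}$, and $\textup{Tr}(U)=u$ follows from $\theta_{k}(0)=1$ for every $k$.

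For the co-normal identity, the same rescaling gives $-\lim_{y\to 0^{+}} y^{1-2s}\theta_{k}'(y)=\kappa_{s}(|k|^{2}+m^{2})^{s}$ for each $k$, so formally
\begin{equation*}
-\lim_{y\to 0^{+}} y^{1-2s}\partial_{y}U(x,y)=\kappa_{s}\sum_{k\in\Z^{N}}c_{k}(|k|^{2}+m^{2})^{s}\,\frac{e^{\imath k\cdot x}}{(2\pi)^{N/2}}=\kappa_{s}(-\Delta+m^{2})^{s}u(x),
\end{equation*}
which I would justify in the sense of $\mathbb{H}^{-s}_{2\pi}$ by testing against Fourier polynomials $\varphi\in\mathcal{C}^{\infty}_{2\pi}(\R^{N})$: the weak formulation against an extension of $\varphi$ reduces by Parseval to a finite mode-by-mode identity, and density of trigonometric polynomials in $\h$ (together with Theorem \ref{tracethmper}) extends the identity to all test functions.

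The main technical obstacle is the rigorous justification of the formal interchanges: convergence of the truncated partial sums in $\Y$ (which follows from the norm identity above applied to tails), validity of the PDE in the weak sense obtained by passing to the limit, and the clean interpretation of the boundary limit in the dual space $\mathbb{H}^{-s}_{2\pi}$. Uniqueness is comparatively soft: given two candidates $U_{1},U_{2}$, their difference $W\in\Y$ has zero trace, and testing the homogeneous weak equation against $W$ itself gives $\|W\|_{\Y}^{2}=0$, so $W\equiv 0$.
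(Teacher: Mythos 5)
Your proposal is correct and coincides with the standard separation-of-variables construction underlying the cited theorem: the paper sketches exactly this for the Dirichlet case in Section 2 (the explicit extension $U(x,y)=\sum_{k}c_{k}\varphi_{k}(x)\theta(\sqrt{\lambda_{k}}\,y)$ with the single profile ODE and the identity $\|U\|^{2}=\kappa_{s}|u|^{2}_{\A}$), and your periodic version, with $e^{\imath k\cdot x}/(2\pi)^{N/2}$ and $\sqrt{|k|^{2}+m^{2}}$ replacing $\varphi_{k}$ and $\sqrt{\lambda_{k}}$, is precisely the argument carried out in \cite{A1, A2}. Since the paper cites rather than reproves Theorem~\ref{CSper}, the only in-paper analogue is the Dirichlet construction, and your outline matches it step for step: rescaled profile ODE, Parseval norm identity giving $\|U\|^{2}_{\Y}=\kappa_{s}|u|^{2}_{\h}$, mode-by-mode co-normal limit, and uniqueness by energy.
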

In view of Theorem \ref{CSper}, we can deduce that the study of \eqref{Pp} is equivalent to consider the following degenerate elliptic Neumann problem
\begin{equation}\label{R}
\left\{
\begin{array}{ll}
-\dive(y^{1-2s} U)+m^{2}y^{1-2s}U= 0 &\mbox{ in } \T:=\B\times (0, \infty)\\
U_{| {\{x_{i}=-\pi\}}}= U_{| {\{x_{i}=\pi\}}} &\mbox{ on } \partial_{L} \T:=\partial \B \times [0, \infty) \\
\frac{\partial U}{\partial \nu^{1-2s}}=\kappa_{s}[f(x, u)+t +h] &\mbox{ on } \partial^{0} \T:=\B\times \{0\}.
\end{array}
\right.
\end{equation}

\noindent
Solutions to \eqref{R} can be obtained as critical points of the functional  $\Phi: \Y\rightarrow \R$ defined as
\begin{equation*}
\Phi(U)=\frac{1}{2} \|U\|^{2}_{\Y}-\int_{\B} [F(x, u)+(t +h)u] \,dx.
\end{equation*}
From Theorem \ref{tracethmper}, we can see that the following useful inequalities hold  
\begin{equation}\label{RAF}
\kappa_{s}m^{2s}|u|^{2}_{2}\leq \kappa_{s}|u|_{\h}\leq \|U\|^{2}_{\Y}  \mbox{ for any } U\in \Y.
\end{equation}
Thus, with slight modifications, we can prove that Lemma \ref{lem1}, Lemma \ref{lem2}, Lemma \ref{lem3}, Lemma \ref{lem6.5}, Lemma \ref{lem6.6} and Lemma \ref{PSsl} hold with $\Omega=(-\pi, \pi)^{N}$, $\lambda_{1}=m^{2}$, $\varphi_{1}=\frac{1}{(2\pi)^{N}}$, \eqref{RAF} instead of \eqref{Poinineq}, and replacing Theorem \ref{tracethm} by Theorem \ref{tracethmper}, and $\|\cdot\|$ by $\|\cdot\|_{\Y}$. Concerning the proof of Lemma \ref{lem4}, the existence of a supersolution to \eqref{R} for $t$ sufficiently small, can be easily obtained considering the unique solution $W$ of the following problem
\begin{equation*}
\left\{
\begin{array}{ll}
-\dive(y^{1-2s} W)+m^{2}y^{1-2s}W= 0 &\mbox{ in } \T:=\B\times (0, \infty)\\
W_{| {\{x_{i}=-\pi\}}}= W_{| {\{x_{i}=\pi\}}} &\mbox{ on } \partial_{L} \T:=\partial \B \times [0, \infty) \\
\frac{\partial W}{\partial \nu^{1-2s}}=\kappa_{s}[f_{1}+h] &\mbox{ on } \partial^{0} \T:=\B\times \{0\}
\end{array}
\right.
\end{equation*}
where $f_{1}(x)=f(x, 0)-\-int_{\B} f(\tau, 0)\, d\tau$. Then $W$ is a supersolution to \eqref{R} provided that
$$
f_{1}+h \geq f(x, w) + t+h = f(x, w) - f(x, 0) + f(x, 0) + t+ h, 
$$
that is
$$
t\leq -\left[\-int_{\B} f(\tau, 0)\, d\tau + \max_{x\in [-\pi, \pi]^{N}} [f(x, w)-f(x, 0)]\right] =:t_{0}.
$$
Therefore, taking into account that a suitable variant of Lemma \ref{boundlem} holds in our framework (see Theorem $9$ in \cite{A2} and Proposition $3$ in \cite{FF}), and making use of the Harnack inequality (see Proposition $2$ in \cite{FF} and Theorem $2.1$ in \cite{RS}), we can argue as in Section $4$ to deduce that \eqref{R} admits at least two periodic solutions for any $t\leq t_{0}$. This ends the proof of Theorem \ref{thm2}.

\smallskip


\end{document}